\documentclass[11pt,reqno]{amsart}
\usepackage{amsfonts, amssymb, amsmath, enumerate, mathtools}
\usepackage{fullpage, etoolbox}
\usepackage[usenames,dvipsnames]{xcolor}
\usepackage[linktoc=page, colorlinks, linkcolor=blue, citecolor=blue]{hyperref}
\usepackage[textsize=scriptsize]{todonotes}

\numberwithin{equation}{section}


\newcommand{\E}{\mathbb{E}}

\DeclareMathOperator{\diam}{diam}
\DeclareMathOperator{\rad}{rad}

\newcommand{\eps}{\varepsilon}

\renewcommand{\Pr}[2][]{\mathbb{P}_{#1} \left\{ #2 \rule{0mm}{3mm}\right\}}
\newcommand{\ip}[2]{\left\langle#1,#2\right\rangle}

\newcommand{\norm}[1]{\left\| #1 \right\|_2}
\newcommand{\abs}[1]{\left| #1 \right|}

\def \N {\mathbb{N}}

\def \R {\mathbb{R}}

\def \e {\varepsilon}

\def \tran {\mathsf{T}}

\def \psione {{\psi_1}}
\def \psitwo {{\psi_2}}

\newtheorem{theorem}{Theorem}[section]
\newtheorem{proposition}[theorem]{Proposition}
\newtheorem{corollary}[theorem]{Corollary}
\newtheorem{lemma}[theorem]{Lemma}

\theoremstyle{remark}
\newtheorem{remark}[theorem]{Remark}

\title[]{A simple tool for bounding \\ the deviation of random matrices on geometric sets}
\author{Christopher Liaw \and Abbas Mehrabian \and Yaniv Plan \and Roman Vershynin}
\date{\today}
\address{C.L.:  Department of Computer Science, University of British Columbia, 2366 Main Mall, Vancouver, BC V6T 1Z4, Canada}
\email{cvliaw@cs.ubc.ca}
\thanks{C.L.~is partially supported by an NSERC graduate scholarship. 
  A.M.~is supported by an NSERC Postdoctoral Fellowship.
  Y.P.~is partially supported by NSERC grant 22R23068.
  R.V.~is partially supported by NSF grant DMS 1265782 and USAF Grant FA9550-14-1-0009.}
\address{A.M.:  Department of Computer Science, University of British Columbia, 2366 Main Mall, Vancouver, BC V6T 1Z4, Canada, and
School of Computing Science, Simon Fraser University,
8888 University Drive, Burnaby, BC V5A 1S6, Canada
}
\email{abbasmehrabian@gmail.com}
\address{Y.P.:  Department of Mathematics, University of British Columbia, 1984 Mathematics Rd, Vancouver, BC V6T 1Z2, Canada}
\email{yaniv@math.ubc.ca}
\address{R.V.: Department of Mathematics, University of Michigan, 530 Church St., Ann Arbor, MI 48109, U.S.A.}
\email{romanv@umich.edu}

\begin{document}

\begin{abstract}
Let $A$ be an isotropic, sub-gaussian $m\times n$ matrix.
We prove that the process
$Z_x \coloneqq \norm{Ax} - \sqrt m \norm{x}$
has sub-gaussian increments,
that is,
$\|Z_x - Z_y\|_\psitwo \le C \|x-y\|_2$  for any  $x,y \in \R^n$.
Using this, we show that for any bounded set $T\subseteq \R^n$,
the deviation of 
$\|Ax\|_2$ around its mean is uniformly bounded by the Gaussian complexity of $T$.
We also prove a local version of this theorem,
which allows for unbounded sets. 
These theorems have various applications,
some of which are reviewed in this paper.  In particular, we give a new result regarding model selection in the constrained linear model.
\end{abstract}

\maketitle

\section{Introduction}

Recall that a random variable $Z$ is {\em sub-gaussian} if its distribution is dominated by a normal distribution. 
One of several equivalent ways to define this rigorously is to require the Orlicz norm 
$$
\|Z\|_\psitwo := \inf \big\{ K>0 :\; \E \psi_2(|Z|/K) \le 1 \} 
$$
to be finite, for the Orlicz function $\psi_2(x) = \exp(x^2) - 1$.  
Also recall that a random vector $X$ in $\R^n$ is sub-gaussian if all of its one-dimensional marginals 
are sub-gaussian random variables; this is quantified by the norm 
$$
\|X\|_\psitwo := \sup_{\theta \in S^{n-1}} \big\| \ip{X}{\theta} \big\|_{\psitwo}.
$$
For basic properties and examples of sub-gaussian random variables and vectors, see e.g. \cite{V_RMT}.

In this paper we study {\em isotropic, sub-gaussian random matrices} $A$. 
This means that we require the rows $A_i$ of $A$ to be independent, isotropic, and sub-gaussian random vectors:
\begin{equation}         \label{eq: A}
\E A_i A_i^\tran = I, \quad \|A_i\|_\psitwo \le K.
\end{equation}
In Remark \ref{rem: remove isotropic} below we show how to remove the isotropic assumption.

Suppose $A$ is an $m \times n$ isotropic, sub-gaussian random matrix, and $T \subset \R^n$ is a given set. 
We are wondering when $A$ acts as an approximate isometry on $T$, that is, 
when $\|A x\|_2$ concentrates near the value $(\E \|A x\|_2^2)^{1/2} = \sqrt{m} \|x\|_2$ uniformly over vectors $x \in T$.

Such a uniform deviation result must somehow depend on the ``size'' of the set $T$. 
A simple way to quantify the 
size of $T$ is through the {\em Gaussian complexity} 
\begin{equation}         \label{eq: gamma}
\gamma(T) := \E \sup_{x \in T} |\ip{g}{x}| \quad \text{where } g \sim N(0,I_n).
\end{equation}
One can often find in the literature the following translation-invariant cousin of Gaussian complexity, 
called the {\em Gaussian width} of $T$:
$$
w(T) := \E \sup_{x \in T} \ip{g}{x} 
= \frac{1}{2} \E \sup_{x \in T-T} \ip{g}{x}.
$$
These two quantities are closely related. Indeed, a standard calculation shows that
\begin{equation}         \label{eq: complexity vs width}
\frac{1}{3} \big[ w(T) + \|y\|_2 \big] 
\le \gamma(T) 
\le 2 \big[ w(T) + \|y\|_2 \big]
\quad \text{for every } y \in T.
\end{equation}
The reader is referred to \cite[Section 2]{PV}, \cite[Section 3.5]{V_Estimation} for other basic properties
of Gaussian width. Our main result is that the deviation of 
$\|Ax\|_2 $ over $T$ is uniformly bounded by the Gaussian complexity of $T$.

\begin{theorem}[Deviation of random matrices on sets]				\label{thm: deviation}
  Let $A$ be an isotropic, sub-gaussian random matrix as in \eqref{eq: A}, 
  and $T$ be a bounded subset of $\R^n$. Then 
  $$
  \E \sup_{x \in T}  \left| \|Ax\|_2 - \sqrt{m} \|x\|_2 \right| \le C K^2 \cdot \gamma(T).
  $$
\end{theorem}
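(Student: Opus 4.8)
The plan is to deduce the theorem from the sub-gaussian increments bound announced in the abstract, namely that $Z_x := \|Ax\|_2 - \sqrt m\,\|x\|_2$ satisfies $\|Z_x - Z_y\|_{\psitwo} \le C K^2 \|x-y\|_2$ for all $x,y\in\R^n$; I take this as given. The key observation is that the canonical Gaussian process $g_x := \ip{g}{x}$ has increments of exactly the same order, $\|g_x - g_y\|_{\psitwo}\asymp\|x-y\|_2$, and its expected supremum over $T$ is governed by the Gaussian width --- so the strategy is to transfer this control from $g$ to $Z$ by chaining, and then convert Gaussian width to Gaussian complexity via \eqref{eq: complexity vs width}.

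First I would peel off a base point. Fix any $x_0\in T$; since $Z_0 = 0$, the increments bound gives $\|Z_{x_0}\|_{\psitwo} = \|Z_{x_0} - Z_0\|_{\psitwo}\le C K^2\|x_0\|_2$, hence $\E|Z_{x_0}|\le C K^2\|x_0\|_2$, and by the triangle inequality
$$
\E\sup_{x\in T}|Z_x| \;\le\; \E|Z_{x_0}| \;+\; \E\sup_{x,y\in T}|Z_x - Z_y|.
$$
Next I would bound the diameter term by the generic chaining bound for processes with sub-gaussian increments (Talagrand's majorizing measures): for a process $(W_x)_{x\in T}$ with $\|W_x - W_y\|_{\psitwo}\le\Lambda\|x-y\|_2$ one has $\E\sup_{x,y\in T}|W_x - W_y|\le C\Lambda\,\gamma_2(T,\|\cdot\|_2)$, where $\gamma_2$ is Talagrand's functional, and the majorizing measure theorem gives $\gamma_2(T,\|\cdot\|_2)\asymp\E\sup_{x\in T}\ip{g}{x} = w(T)$. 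Applying this with $\Lambda = C K^2$ and combining with the previous display yields $\E\sup_{x\in T}|Z_x|\le C K^2\big(\|x_0\|_2 + w(T)\big)$, and the left inequality in \eqref{eq: complexity vs width} bounds the right-hand side by $C K^2\gamma(T)$, proving the theorem.

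I expect the only real obstacle to be the increments bound itself --- the genuinely new ingredient --- after which the argument above is routine. The delicate point there is that one cannot afford the naive estimate $\bigl|\,\|Ax\|_2 - \|Ay\|_2\bigr|\le\|A(x-y)\|_2$, whose right-hand side is typically of order $K\sqrt m\,\|x-y\|_2$ and thus off by a factor $\sqrt m$; instead one must exploit the curvature of the sphere, reducing by homogeneity to unit vectors and showing that $x\mapsto\|Ax\|_2$ is locally close to an affine function with sub-gaussian slope of order $K^2$, uniformly in the direction. Finally, I note that if one accepts a logarithmic loss, the chaining step can be replaced by the far more elementary Dudley entropy integral, at the cost of replacing $\gamma(T)$ in the conclusion by $C K^2\int_0^\infty\sqrt{\log N(T,\|\cdot\|_2,\e)}\,d\e$, which in the worst case exceeds $w(T)$ by a logarithm.
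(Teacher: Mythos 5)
Your proposal matches the paper's proof almost verbatim: the paper also takes Theorem~\ref{thm: increments} as the essential input, fixes a base point $y \in T$, splits $\E\sup_{x\in T}|Z_x|$ via the triangle inequality into $\E|Z_y| \lesssim K^2\|y\|_2$ and the increment term, controls the latter by Talagrand's Majorizing Measure Theorem to get $CK^2 w(T)$, and concludes via the equivalence \eqref{eq: complexity vs width}. Your identification of the increments bound as the only genuinely new ingredient, and your remark about why the naive estimate $\big|\|Ax\|_2 - \|Ay\|_2\big| \le \|A(x-y)\|_2$ loses a factor of $\sqrt m$, are both accurate and reflect the actual structure of the paper.
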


(Throughout, $c$ and $C$ denote absolute constants that may change from line to line).
For Gaussian random matrices $A$, this theorem follows from a result of G. Schechtman \cite{Schechtman}.
For sub-gaussian random matrices $A$, one can find related results in 
\cite{KM, MPT, Dirksen}.  
Comparisons with these results can be found in Section~\ref{sec:compare}.

The dependence of the right-hand-side of this theorem on $T$ is essentially optimal.
This is not hard to see for $m=1$ by a direct calculation. For general $m$, 
optimality follows from several consequences of Theorem~\ref{thm: deviation} that are known to be sharp; 
see Section~\ref{s: random image}.

We do not know if the dependence on $K$ in the theorem is optimal or if the dependence can be improved to linear.
However, none of the previous results have shown a linear dependence on $K$ even in partial cases.

\begin{remark}[Removing isotropic condition] \label{rem: remove isotropic}
Theorem~\ref{thm: deviation} and the results below may also be restated without the assumption that $A$ is isotropic using a simple linear transformation.  Indeed, suppose that instead of being isotropic, each row of $A$ satisfies $\E A_i A_i^T = \Sigma$ for some invertible covariance matrix $\Sigma$.  Consider the whitened version $B_i := \sqrt{\Sigma^{-1}} A_i$.  Note that $\|B_i\|_\psitwo \leq ||\sqrt{\Sigma^{-1}}|| \cdot \|A_i\|_\psitwo \leq ||\sqrt{\Sigma^{-1}}|| \cdot K$.  Let $B$ be the random matrix whose $i$th row is $B_i$.  Then
\begin{align*}
\E \sup_{x \in T}  \left| \|Ax\|_2 - \sqrt{m} \|\sqrt{\Sigma} x\|_2 \right| &= 
\E \sup_{x \in T}  \left| \|B \sqrt{\Sigma} x\|_2 - \sqrt{m} \|\sqrt{\Sigma} x\|_2 \right|\\
&= \E \sup_{x \in \sqrt{\Sigma} T}  \left| \|B x\|_2 - \sqrt{m} \|x\|_2 \right|\\
&\leq C \|\Sigma^{-1}\| K^2 \gamma(\sqrt{\Sigma} T).
\end{align*}
The last line follows from Theorem \ref{thm: deviation}. Note also that 
$\gamma(\sqrt{\Sigma} T) \le \|\sqrt{\Sigma}\| \gamma(T) = \sqrt{\|\Sigma\|} \gamma(T)$, 
which follows from Sudakov-Fernique's inequality. Summarizing, 
our bounds can be extended to anisotropic distributions by including in them
the smallest and largest eigenvalues of the covariance matrix $\Sigma$.
\end{remark}

%

Our proof of Theorem~\ref{thm: deviation} given in Section~\ref{s: talagrand} is particularly simple, and is 
inspired by the approach of G.~Schechtman \cite{Schechtman}. 
He showed that for Gaussian matrices $A$, the random process 
$Z_x := \|Ax\|_2 - (\E \|Ax\|_2^2)^{1/2}$ indexed by points $x \in \R^n$,
has sub-gaussian increments, that is 
\begin{equation}         \label{eq: schechtman}
\|Z_x - Z_y\|_\psitwo \le C \|x-y\|_2 \quad \text{for every } x,y \in \R^n.
\end{equation}
Then Talagrand's Majorizing Measure Theorem implies the desired conclusion that\footnote{In this paper, we sometimes hide
  absolute constants in the inequalities marked $\lesssim$.}
$\E \sup_{x \in T} |Z_x| \lesssim \gamma(T)$.

However, it should be noted that G.~Schechtman's proof of \eqref{eq: schechtman} 
makes heavy use of the rotation invariance property of the Gaussian distribution of $A$. 
When $A$ is only sub-gaussian, there is no rotation invariance to rely on, and it was unknown 
if one can transfer G.~Schechtman's argument to this setting. 
This is precisely what we do here: we show that, perhaps surprisingly, the sub-gaussian increment 
property \eqref{eq: schechtman} holds for general sub-gaussian matrices $A$.

\begin{theorem}[Sub-gaussian process]					\label{thm: increments}
  Let $A$ be an isotropic, sub-gaussian random matrix as in \eqref{eq: A}.
  Then the random process $$Z_x := \|Ax\|_2 - (\E \|Ax\|_2^2)^{1/2} = \norm{Ax}- \sqrt{m} \|x\|_2 $$
  has sub-gaussian increments: 
  \begin{equation}\label{subgaussian_increments}
  \|Z_x - Z_y\|_\psitwo \le C K^2 \|x-y\|_2 \quad \textnormal{for every } x,y \in \R^n.
  \end{equation}
\end{theorem}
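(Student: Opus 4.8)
The plan is to pass from the two‑point bound \eqref{subgaussian_increments} to a one‑point bound by writing the increment as an integral of a directional derivative, and then to prove that one‑point bound by combining Bernstein's inequality with the (standard) sub‑gaussian concentration of $\norm{A\theta}$ around $\sqrt m$.

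First normalize: the inequality is trivial when $x=y$, and since $Z_{\lambda x}=\abs{\lambda}\,Z_x$ we may rescale $x,y$ by $1/\norm{x-y}$ and assume $\norm{x-y}=1$. Set $p:=x-y$ and $g(v):=\norm{Av}-\sqrt m\,\norm v$, so $Z_x-Z_y=g(x)-g(y)$, and $v(t):=(1-t)y+tx$. Since $g$ is globally Lipschitz (its Lipschitz constant is at most the operator norm of $A$ plus $\sqrt m$, a quantity of finite $\psitwo$‑norm), $t\mapsto g(v(t))$ is absolutely continuous and, almost surely,
$$Z_x-Z_y=\int_0^1\ip{\nabla g(v(t))}{p}\,dt,\qquad\nabla g(v)=A^\tran\frac{Av}{\norm{Av}}-\sqrt m\,\frac{v}{\norm v}$$
(the a.s.\ Lebesgue‑null set of $t$ with $v(t)=0$ or $Av(t)=0$ is harmless; one can also sidestep this by first perturbing $A$ by a vanishingly small Gaussian matrix and passing to the limit). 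By Minkowski's integral inequality in the Banach space $L_\psitwo$,
$$\|Z_x-Z_y\|_\psitwo\le\int_0^1\big\|\ip{\nabla g(v(t))}{p}\big\|_\psitwo\,dt\le\sup_{v\neq0}\big\|\ip{\nabla g(v)}{p}\big\|_\psitwo,$$
so it suffices to bound $\big\|\ip{\nabla g(v)}{p}\big\|_\psitwo\le CK^2$ for each fixed $v\neq0$ and each fixed unit vector $p$.

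Writing $\bar v:=v/\norm v$ and decomposing $p=\ip{p}{\bar v}\bar v+q$ with $q\perp\bar v$ and $\norm q\le1$, a direct computation gives
$$\ip{\nabla g(v)}{p}=\ip{p}{\bar v}\big(\norm{A\bar v}-\sqrt m\big)+\Big\langle\tfrac{A\bar v}{\norm{A\bar v}},\,Aq\Big\rangle.$$
The first term has $\psitwo$‑norm at most $\big\|\norm{A\bar v}-\sqrt m\big\|_\psitwo\le CK^2$, the standard norm‑concentration estimate, obtained by applying Bernstein's inequality to $\norm{A\bar v}^2=\sum_i\ip{A_i}{\bar v}^2$ (a sum of i.i.d.\ mean‑one random variables with $\psi_1$‑norm $\lesssim K^2$) and passing to square roots. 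Thus everything reduces to showing
$$\Big\|\big\langle\tfrac{A\bar v}{\norm{A\bar v}},\,Aq\big\rangle\Big\|_\psitwo\le CK^2\qquad\text{for }\bar v\in S^{n-1},\ q\perp\bar v,\ \norm q\le1.$$
I would split on the deviation level $u$. Cauchy--Schwarz gives the deterministic bound $\big|\langle\tfrac{A\bar v}{\norm{A\bar v}},Aq\rangle\big|\le\norm{Aq}$, and $\norm{Aq}$ concentrates about $\sqrt m\,\norm q\le\sqrt m$ at scale $O(K^2)$, so once $u$ exceeds a suitable constant times $\sqrt m$ we get $\Pr{\norm{Aq}>u}\le2\exp(-cu^2/K^4)$. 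For smaller $u$, put $G:=\{\norm{A\bar v}\ge\sqrt m/2\}$, so $\Pr{G^c}\le2\exp(-cm/K^4)$; on $G$ we have $\big|\langle\tfrac{A\bar v}{\norm{A\bar v}},Aq\rangle\big|\le\tfrac2{\sqrt m}\big|\sum_i\ip{A_i}{\bar v}\ip{A_i}{q}\big|$, and since $\bar v\perp q$ and $A$ is isotropic the summands are i.i.d., mean zero, with $\psi_1$‑norm $\le K^2$, so Bernstein's inequality bounds this by $2\exp(-cu^2/K^4)$ in the whole range $u\lesssim K^2\sqrt m$. Patching the pieces together yields the claimed sub‑gaussian tail.

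The main obstacle is this last estimate, and the crux is precisely the scale interplay just used. A direct application of Bernstein to $\sum_i\ip{A_i}{\bar v}\ip{A_i}{q}$ only sees \emph{sub‑exponential} summands (products of sub‑gaussians), so by itself it degenerates to a tail like $\exp(-cu\sqrt m/K^2)$ once $u\gg K^2\sqrt m$ --- too weak for sub‑gaussian increments. What rescues the argument is that in exactly this regime the crude bound $\big|\langle\tfrac{A\bar v}{\norm{A\bar v}},Aq\rangle\big|\le\norm{Aq}$ is already sharp, since $\norm{Aq}$ is tightly concentrated about $\sqrt m\,\norm q\le\sqrt m$ at scale $O(K^2)$, capping the tail with exactly the decay one needs. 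Welding the Bernstein regime, the $\norm{Aq}$‑cap regime, and the rare event $G^c$ into a single clean sub‑gaussian bound, uniformly in $v$ and $q$, is the heart of the argument; the remaining ingredients --- the integral representation, Minkowski's inequality, and the norm‑concentration lemma --- are routine.
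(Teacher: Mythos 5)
Your proposal is correct, and (modulo the measure-theoretic care around the integral representation, which you flag) it does yield the claimed bound; but it takes a genuinely different route from the paper's. The paper works with \emph{finite differences}: after reducing to $x,y\in S^{n-1}$, it multiplies through by $\norm{Ax}+\norm{Ay}$ to write the increment via $\norm{Ax}^2-\norm{Ay}^2=\ip{A(x-y)}{A(x+y)}=\sum_i\ip{A_i}{x-y}\ip{A_i}{x+y}$, uses $\ip{x-y}{x+y}=0$ to make the summands mean-zero so Bernstein applies, handles the large-deviation regime $s\gtrsim\sqrt m$ separately through $|\norm{Ax}-\norm{Ay}|\le\norm{A(x-y)}$, and then lifts the bound off the sphere by a short triangle-inequality argument along the ray through $y$. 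You instead \emph{linearize}, writing the increment as $\int_0^1\ip{\nabla g(v(t))}{p}\,dt$ and invoking a Minkowski-type inequality for the $\psitwo$-Orlicz norm to reduce to a one-point gradient bound, which you split via $p=\ip{p}{\bar{v}}\bar{v}+q$ with $q\perp\bar{v}$. The probabilistic core is the same in disguise: your orthogonality $q\perp\bar{v}$ is the analogue of the paper's $\ip{x-y}{x+y}=0$ and makes $\sum_i\ip{A_i}{\bar{v}}\ip{A_i}{q}$ mean-zero for Bernstein; your Cauchy--Schwarz fallback plus the concentration of $\norm{Aq}$ plays the role of the paper's Case~1; and your conditioning on $\norm{A\bar{v}}\gtrsim\sqrt m$ matches the paper's term $p_2$. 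What your packaging buys is a clean single-direction estimate with no sphere-to-$\R^n$ extension step; what it costs is extra analytic overhead (a.e.\ differentiability of $g\circ v$, Minkowski in $L_\psitwo$, and the null sets where $Av(t)=0$). On that last point, you do not actually need the Gaussian perturbation: the bad configurations require $Ax=Ay=0$, and $\Pr{Ax=0}\le\exp(-cm/K^4)$ by a Paley--Zygmund type anti-concentration bound using $\E\ip{A_i}{x}^2=1$ and $\norm{A_i}_\psitwo\le K$, which is already negligible at the right scale. On balance the paper's argument is more elementary and self-contained, while yours isolates the one-point gradient bound as the structural heart of the estimate.
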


The proof of this theorem, given in Section~\ref{s: increments proof}, 
essentially consists of a couple of non-trivial applications of Bernstein's inequality; 
parts of the proof are inspired by G.~Schechtman's argument.
Applying Talagrand's Majorizing Measure Theorem (see Theorem~\ref{thm: talagrand} below),
we immediately obtain Theorem~\ref{thm: deviation}.

We also prove a high-probability version of Theorem~\ref{thm: deviation}.
%

%
%
%
%

\begin{theorem}[Deviation of random matrices on sets: tail bounds]				\label{thm: deviation tail2}
  Under the assumptions of Theorem~\ref{thm: deviation}, for any $u \ge 0$ the event 
  $$
  \sup_{x \in T}  \left| \|Ax\|_2 - \sqrt{m} \|x\|_2 \right| \le C K^2 \big[ w(T) + u \cdot \rad(T) \big]
  $$
  holds with probability at least $1-\exp(-u^2)$. Here $\rad(T) := \sup_{x\in T} \norm{x}$ denotes 
  the radius of $T$.
\end{theorem}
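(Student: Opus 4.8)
The plan is to upgrade the in-expectation bound of Theorem~\ref{thm: deviation} to a high-probability statement by combining the sub-gaussian increments property of Theorem~\ref{thm: increments} with a tail version of the Majorizing Measure Theorem (generic chaining). Concretely, I would invoke the standard fact (see e.g.\ Talagrand's book, or \cite{Dirksen}) that if a centered process $(Z_x)_{x\in T}$ has sub-gaussian increments with parameter $L$, i.e.\ $\|Z_x - Z_y\|_\psitwo \le L\|x-y\|_2$, then for every $u\ge 0$,
\begin{equation*}
\sup_{x,y\in T} |Z_x - Z_y| \le C L \big[ \gamma_2(T) + u\cdot\diam(T) \big]
\end{equation*}
with probability at least $1 - \exp(-u^2)$, where $\gamma_2(T)$ is Talagrand's functional. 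Here $L = CK^2$ by Theorem~\ref{thm: increments}, and by the Majorizing Measure Theorem $\gamma_2(T) \lesssim w(T)$ (the Gaussian width, since increments of a Gaussian process give exactly the $\ell_2$-metric up to constants), while $\diam(T) \le 2\rad(T)$.

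The one subtlety is that Theorem~\ref{thm: deviation tail2} bounds $\sup_x |Z_x|$ itself, not the symmetrized quantity $\sup_{x,y}|Z_x-Z_y|$. To bridge this I would fix a basepoint: pick any $x_0\in T$, write $Z_x = (Z_x - Z_{x_0}) + Z_{x_0}$, so that $\sup_{x\in T}|Z_x| \le \sup_{x,y\in T}|Z_x - Z_y| + |Z_{x_0}|$. The first term is controlled by the chaining tail bound above. For the second term, $Z_{x_0} = \|Ax_0\|_2 - \sqrt m\|x_0\|_2$ is a single sub-gaussian random variable: indeed $\|Z_{x_0}\|_\psitwo = \|Z_{x_0} - Z_0\|_\psitwo \le CK^2\|x_0\|_2 \le CK^2\rad(T)$ by Theorem~\ref{thm: increments} applied with $y=0$ (noting $Z_0=0$). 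Hence $|Z_{x_0}| \le CK^2 u\cdot\rad(T)$ with probability at least $1-\exp(-u^2)$, and a union bound over the two events (adjusting constants) yields the claim. A cleaner alternative that avoids the basepoint term entirely is to observe that $0 \in \R^n$ can always be adjoined: apply the chaining bound to $T\cup\{0\}$, note $Z_0 = 0$, and use that $\gamma_2(T\cup\{0\}) \lesssim \gamma(T) \lesssim w(T) + \rad(T)$ via \eqref{eq: complexity vs width}; then $\sup_{x\in T}|Z_x| = \sup_{x\in T}|Z_x - Z_0| \le \sup_{x,y\in T\cup\{0\}}|Z_x-Z_y|$ directly.

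I expect the main obstacle to be purely bookkeeping: stating the tail form of generic chaining in a self-contained way and tracking how $\gamma_2$ relates to $w(T)$ versus $\gamma(T)$, since the theorem is phrased with $w(T)$ plus a separate $\rad(T)$ term rather than with $\gamma(T)$. The resolution is exactly the decomposition above — the ``width part'' of the bound comes from chaining over increments (which only sees the translation-invariant metric, hence $w$), and the ``radius part'' comes from pinning down the value at a single point. No new probabilistic input beyond Theorem~\ref{thm: increments} is needed; everything else is the deterministic chaining machinery together with \eqref{eq: complexity vs width}.
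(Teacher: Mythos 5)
Your proposal is correct and essentially matches the paper's own argument: the ``cleaner alternative'' you describe---adjoin $0$ to $T$ (which leaves the radius unchanged), observe $Z_0=0$, and apply the tail form of the Majorizing Measure Theorem (the paper's Theorem~\ref{thm: talagrand}, borrowed from Dirksen) together with $\diam(T)\le 2\,\rad(T)$---is exactly what the paper does. Your primary route (fixing a basepoint $x_0\in T$, bounding $|Z_{x_0}|$ separately via the increment inequality with $y=0$, and union-bounding) is just a minor rearrangement of the same idea, analogous to how the paper handles the in-expectation version in \eqref{eq: important label}.
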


This result will be deduced in Section~\ref{s: talagrand} 
from a high-probability version of Talagrand's theorem. 

In the light of the equivalence \eqref{eq: complexity vs width}, notice that 
Theorem~\ref{thm: deviation tail2} implies the following simpler 
but weaker bound 
\begin{equation}         \label{eq: deviation tail 2 simple}
\sup_{x \in T}  \left| \|Ax\|_2 - \sqrt{m} \|x\|_2 \right| \le C K^2 u \cdot \gamma(T)
\end{equation}
if $u \ge 1$. Note that even in this simple bound, $\gamma(T)$ cannot be replaced with 
the Gaussian width $w(T)$, e.g.\ the result would fail for a singleton $T$. 
This explains why 
the radius of $T$ appears in Theorem~\ref{thm: deviation tail2}.

Restricting the set $T$ to the unit sphere, we  obtain the following corollary.
\begin{corollary}				\label{cor: on sphere}
   Under the assumptions of Theorem~\ref{thm: deviation}, for any $u \geq 0$ the
   event
   \[
      \sup_{x \in T \cap S^{n-1}} \left| \|Ax\|_2 - \sqrt{m} \right|
      \leq
      CK^2 \left[ w(T \cap S^{n-1}) + u \right]
   \]
   holds with probability at least $1 - \exp(-u^2)$.
   \label{thm: on sphere}
\end{corollary}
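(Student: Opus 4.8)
The plan is to obtain this as an immediate specialization of Theorem~\ref{thm: deviation tail2}. Apply that theorem to the bounded set $T' := T \cap S^{n-1}$ in place of $T$. Since every $x \in T'$ has $\norm{x} = 1$, we have $\sqrt{m}\,\norm{x} = \sqrt{m}$, so the left-hand side of the conclusion of Theorem~\ref{thm: deviation tail2} is precisely $\sup_{x \in T \cap S^{n-1}} \left| \norm{Ax} - \sqrt{m} \right|$. Moreover $\rad(T') = \sup_{x \in T'} \norm{x} \le 1$, so $u \cdot \rad(T') \le u$ and the right-hand side is bounded above by $CK^2 \left[ w(T \cap S^{n-1}) + u \right]$. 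Combining these two observations with Theorem~\ref{thm: deviation tail2} shows that the claimed event holds with probability at least $1 - \exp(-u^2)$.

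The only point requiring a word of care is the degenerate case $T \cap S^{n-1} = \emptyset$, in which both sides vanish by the usual conventions (with $w(\emptyset) = 0$) and the statement is vacuous. Aside from this, there is no genuine obstacle: the corollary is a direct substitution into Theorem~\ref{thm: deviation tail2}, using only that restriction to the sphere normalizes $\norm{x}$ to $1$ and gives radius at most $1$.
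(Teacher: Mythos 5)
Your proof is correct and matches the paper's intended argument: the paper states the corollary with the phrase ``Restricting the set $T$ to the unit sphere,'' which is precisely your specialization of Theorem~\ref{thm: deviation tail2} to $T' = T\cap S^{n-1}$, using that $\norm{x}=1$ normalizes the term $\sqrt{m}\norm{x}$ and that $\rad(T')\le 1$. Nothing further is needed.
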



In Theorems~\ref{thm: deviation} and~\ref{thm: deviation tail2}, we assumed that the set $T$ is bounded.
For unbounded sets, we can still prove a `local version' of Theorem \ref{thm: deviation tail2}.
Let us state a simpler form of this result here. In Section~\ref{sec:prooflocal}, we will 
prove a version of the following theorem with a better probability bound. 

\begin{theorem}[Local version]
\label{thm: local version}
   Let $(Z_x)_{x\in \R^n}$ be a random process with sub-gaussian increments as in~(\ref{subgaussian_increments}).  Assume that the process is homogeneous, that is, $Z_{\alpha x} = \alpha Z_x$ for any $\alpha \geq 0$.
   Let $T$ be a star-shaped\footnote{Recall that a set $T$ is called star-shaped if 
   	$t \in T$ implies $\lambda t \in T$ for all $\lambda \in [0,1]$.}  
   subset of $\R^n$, and let $t \geq 1$.
   With probability at least $1-\exp(-t^2)$, we have
   \begin{equation}
      |Z_x|
      \le t\cdot CK^2 \gamma\left( T \cap \|x\|_2 B_2^n \right)  \quad \textnormal{for all } x \in T.
      \label{eqn:local_devtheorem}
   \end{equation}
\end{theorem}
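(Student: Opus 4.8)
The plan is to reduce the unbounded case to the bounded case by a peeling (chaining over scales) argument. Fix $x \in T$ and set $r = \|x\|_2$. The key observation is that by homogeneity, for any $\lambda \in [0,1]$ we have $Z_{\lambda x} = \lambda Z_x$, and since $T$ is star-shaped, the whole segment $[0,x]$ lies in $T$; moreover for any $\rho > 0$ the rescaled point $(\rho/r)x$ lies in $T \cap \rho B_2^n$ provided $\rho \le r$. So it suffices to control $\sup_{y \in T \cap \rho B_2^n} |Z_y|$ simultaneously over a geometric sequence of radii $\rho = 2^k$, and then transfer the bound at the appropriate scale back to the point $x$ itself using homogeneity.

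The steps I would carry out: (1) For a fixed radius $\rho > 0$, apply the high-probability bound behind Theorem~\ref{thm: deviation tail2} (i.e.\ the high-probability form of Talagrand's Majorizing Measure Theorem applied to the sub-gaussian process $Z$) to the bounded set $T_\rho := T \cap \rho B_2^n$, obtaining that $\sup_{y \in T_\rho} |Z_y| \le CK^2[\,w(T_\rho) + u\,\rad(T_\rho)\,] \le CK^2[\,w(T_\rho) + u\rho\,]$ with probability at least $1 - \exp(-u^2)$; using \eqref{eq: complexity vs width} one may replace the bracket by $CK^2(1+u)\gamma(T_\rho)$, and since $\gamma(T_\rho)$ is nondecreasing in $\rho$ this is at most $CK^2(1+u)\gamma(T_\rho)$. (2) Apply this with $\rho = \rho_k := 2^k$ for all integers $k$ (only countably many are relevant once we also note $\gamma(T_\rho) \ge \rho \cdot \gamma(\text{something fixed})$ grows, so small $k$ contribute nothing new, and one truncates at the scale of $\rad(T)$ if $T$ is bounded — but we do not need that). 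Choose the failure probability at scale $k$ to be $\exp(-(t^2 + |k|))$ or similar, so that the union bound over all scales still leaves total failure probability $\le \exp(-t^2)$ up to an absolute constant absorbed by enlarging $C$; this forces the deviation term at scale $k$ to be $C K^2 (t + \sqrt{|k|})\,\gamma(T_{\rho_k})$. (3) Given an arbitrary $x \in T$ with $\|x\|_2 = r$, pick $k$ with $\rho_{k-1} < r \le \rho_k$, so $x \in T_{\rho_k}$ and $\gamma(T_{\rho_k}) \le \gamma(T \cap 2\|x\|_2 B_2^n) \le 2\gamma(T \cap \|x\|_2 B_2^n)$ (the last step by homogeneity of $\gamma$ under scaling and star-shapedness — or just absorb the factor $2$ into $C$). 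The $\sqrt{|k|} \approx \sqrt{\log r}$ term must be shown to be dominated by $t \cdot \gamma(T \cap \|x\|_2 B_2^n)$; this uses that $\gamma(T \cap \rho B_2^n) \gtrsim \rho$ whenever $T$ contains any nonzero point within distance $\rho$, which grows linearly in $\rho$ and hence beats $\sqrt{\log \rho}$. Assembling these gives \eqref{eqn:local_devtheorem} with an absolute constant, after renaming.

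The main obstacle is step (3): arranging the union bound over infinitely many scales so that the extra $\sqrt{\log \|x\|_2}$ penalty incurred at scale $\|x\|_2$ is genuinely absorbed into $t \cdot \gamma(T \cap \|x\|_2 B_2^n)$ uniformly in $x$. This requires a clean lower bound on $\gamma(T\cap\rho B_2^n)$ in terms of $\rho$ (which holds as soon as $T \ne \{0\}$, since then $T$ contains a segment $[0,v]$ and $\gamma([0,\rho v/\|v\|_2]) \asymp \rho$), and a careful bookkeeping of how the per-scale confidence levels are chosen so their sum telescopes to $\exp(-t^2)$ — this is the technical heart, and it is exactly why the theorem is stated with the slightly lossy "simpler form" here and a sharper probability bound deferred to Section~\ref{sec:prooflocal}. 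The homogeneity assumption $Z_{\alpha x} = \alpha Z_x$ is what makes the transfer between scales lossless on the process side; without it one could only peel the set, not the process, and the argument would break.
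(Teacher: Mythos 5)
Your reduction via homogeneity and the lower bound $\gamma(T\cap\rho B_2^n)\gtrsim\rho$ are both correct and match the paper's strategy, but the union bound over scales has a genuine gap. You propose dyadic scales $\rho_k=2^k$ and per-scale confidence $\exp(-(t^2+|k|))$, which forces $u_k\approx\sqrt{t^2+|k|}$ and hence a deviation at scale $k$ of order $K^2\bigl(t+\sqrt{|k|}\,\bigr)\gamma(T\cap\rho_kB_2^n)$. You then argue the $\sqrt{|k|}\approx\sqrt{\log\|x\|_2}$ factor is absorbed because $\gamma(T\cap\rho B_2^n)\gtrsim\rho$ ``grows linearly and beats $\sqrt{\log\rho}$.'' This compares the wrong quantities: the offending term coming out of the MMT tail is $u_k\cdot\rad(T_{\rho_k})\approx\sqrt{|k|}\,\rho_k$, which already carries a factor $\rho_k$, so it scales like $\rho_k\sqrt{\log\rho_k}$ and is \emph{not} dominated by $\gamma(T\cap\rho_kB_2^n)\gtrsim\rho_k$. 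Concretely, when $T$ is a $d$-dimensional subspace one has $\gamma(T\cap\rho B_2^n)\asymp\rho\sqrt d$ at every scale, and your bound at scale $k$ becomes $\asymp\rho_k(\sqrt d+\sqrt{|k|})$, which exceeds $t\,\gamma(T\cap\rho_kB_2^n)\asymp t\rho_k\sqrt d$ once $|k|\gtrsim t^2 d$. Carried out, your argument only gives the theorem with an extra $\sqrt{\log\|x\|_2}$ inflation, which is strictly weaker than~\eqref{eqn:local_devtheorem}; the ``simpler form'' of the theorem is not lossy in the deviation, only in the probability exponent.

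The paper's resolution, which is the idea you are missing, is to choose the confidence parameter $u$ and the scales themselves in terms of $\gamma$ rather than in terms of $\rho$. One picks a \emph{finite} sequence of radii $r_0<\cdots<r_N$ at which the normalized complexity $\gamma_i:=\gamma(\tfrac1{r_i}T\cap B_2^n)$ halves at each step, stopping once $\gamma_i$ is within a factor $2$ of its infimum $W:=\lim_{r\to\rad(T)_-}\gamma(\tfrac1r T\cap B_2^n)\geq\sqrt{2/\pi}$. Applying the Majorizing Measure Theorem to the normalized set $\tfrac1{r_i}T\cap B_2^n$ with $u=c\,t\,\gamma_i$, one gets deviation $\lesssim tK^2\gamma_i$ with failure probability $\exp(-c^2t^2\gamma_i^2)\leq\exp(-c^2t^24^{N-i}W^2)$. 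Because $\gamma_i$ doubles at each step, these exponents grow geometrically, so the union bound over the $N+1$ scales is dominated by its largest term and costs nothing --- no $\sqrt{|k|}$ penalty appears. Put differently: parameterize the peeling by the value of $\gamma$, not by the radius, and let $u\propto\gamma$; then the tail probabilities already decay super-geometrically and the countable union that burdens your version never arises.
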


Combining with Theorem~\ref{thm: increments}, we immediately obtain the following result.

\begin{theorem}[Local version of Theorem~\ref{thm: deviation tail2}]
\label{thm: local matrix version}   
   Let $A$ be an isotropic, sub-gaussian random matrix as in \eqref{eq: A}, 
   and let $T$ be a star-shaped
   subset of $\R^n$, and let $t \geq 1$.
   With probability at least $1-\exp(-t^2)$, we have
   \begin{equation}		 
      \Big| \|Ax\|_2 -  \sqrt{m} \|x\|_2  \Big|
      \le t\cdot CK^2 \gamma\left( T \cap \|x\|_2 B_2^n \right) \quad \textnormal{for all } x \in T.
      \label{eqn:local_dev}
   \end{equation}
\end{theorem}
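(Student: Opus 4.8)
The plan is to obtain Theorem~\ref{thm: local matrix version} as an immediate consequence of the abstract local bound, Theorem~\ref{thm: local version}, applied to the matrix deviation process, once we know that process satisfies the two hypotheses of Theorem~\ref{thm: local version}. Set $Z_x := \norm{Ax} - \sqrt m \norm{x}$ for $x \in \R^n$. We must check (i) that $(Z_x)_{x\in\R^n}$ is homogeneous, i.e.\ $Z_{\alpha x} = \alpha Z_x$ for all $\alpha \ge 0$, and (ii) that $(Z_x)$ has sub-gaussian increments with the constant appearing in~\eqref{subgaussian_increments}.

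Both are already at hand. For (i): for any $\alpha \ge 0$ we have $\norm{A(\alpha x)} = \alpha \norm{Ax}$ and $\sqrt m \norm{\alpha x} = \alpha \sqrt m \norm{x}$, so $Z_{\alpha x} = \alpha Z_x$. For (ii): this is exactly Theorem~\ref{thm: increments}, which gives $\|Z_x - Z_y\|_\psitwo \le C K^2 \norm{x-y}$ for all $x,y \in \R^n$. Feeding this process into Theorem~\ref{thm: local version} with the same parameter $t \ge 1$ produces, with probability at least $1 - \exp(-t^2)$, the bound $|Z_x| \le t\cdot CK^2 \gamma(T \cap \norm{x} B_2^n)$ simultaneously for all $x \in T$; since $Z_x = \norm{Ax} - \sqrt m\norm{x}$, this is precisely~\eqref{eqn:local_dev}, and the constants carry over without change.

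Thus the statement itself requires essentially no new work; the substance lives in Theorem~\ref{thm: local version}, which we are permitted to assume. Were that result not available, I would prove it by a dyadic peeling argument over the scale $r = \norm{x}$: decompose $T$ into shells $T_k := \{x \in T : \norm{x} \in (2^{k-1}, 2^k]\}$, and on each shell apply the bounded tail estimate (Theorem~\ref{thm: deviation tail2}) to the capped, rescaled set $2^{-k}(T \cap 2^k B_2^n)$, using homogeneity of $(Z_x)$ to transport the bound back to $T_k$; a union bound over the relevant values of $k$, with the failure probabilities chosen to sum geometrically, absorbs the extra logarithmic factors into the slack variable $t$. Star-shapedness of $T$ is what makes $T \cap \norm{x}B_2^n$ the right local object and keeps the per-shell Gaussian complexities monotone and controlled. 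The main obstacle in that route is organizing the union bound so that the cost of reaching unbounded scales is paid for by $t$ rather than by the complexity term — but for the corollary at hand this is a non-issue, since it is a direct combination of Theorems~\ref{thm: increments} and~\ref{thm: local version}.
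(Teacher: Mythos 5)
Your proof is correct and matches the paper exactly: the paper likewise deduces Theorem~\ref{thm: local matrix version} by combining Theorem~\ref{thm: increments} (sub-gaussian increments) with Theorem~\ref{thm: local version} (abstract local bound), after the trivial observation that $Z_x = \norm{Ax} - \sqrt m\norm{x}$ is homogeneous. The dyadic peeling sketch you append is in the spirit of the paper's actual proof of Theorem~\ref{thm: local version}, but is not needed here since you are permitted to cite that result.
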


\begin{remark}
\label{rmk: star shaped}
We note that Theorems \ref{thm: local version} and \ref{thm: local matrix version} can also  apply when $T$ is not a star-shaped set, simply by considering the smallest star-shaped set that contains $T$:
\[\text{star}(T) := \bigcup_{\lambda \in [0,1]} \lambda T.\]
Then one only needs to replace $T$ by star$(T)$ in the right-hand side of Equations \eqref{eqn:local_devtheorem} and \eqref{eqn:local_dev}.
\end{remark}

Results of the type of Theorems~\ref{thm: deviation},~\ref{thm: deviation tail2} 
and~\ref{thm: local matrix version}
have been useful in a variety of applications. 
For completeness, we will review some of these applications in the next section.

\section{Applications}


Random matrices have proven to be useful both for modeling data and transforming data in a variety of fields. 
Thus, the theory of this paper has implications for several applications.  A number of classical theoretical discoveries
as well as some new results follow directly from our main theorems.
In particular, the local version of our theorem (Theorem \ref{thm: local matrix version}), allows a new result in model selection under the constrained linear model, with applications in \textit{compressed sensing}.  We give details below.

\subsection{Singular values of random matrices}

The singular values of a random matrix are an important topic of study in random matrix theory.  
A small sample includes covariance estimation \cite{vershynin2012close}, stability in numerical analysis \cite{von1961collected}, and quantum state tomography \cite{gross2010quantum}.   

Corollary \ref{thm: on sphere} may be specialized to bound the singular values of a sub-gaussian matrix.  Indeed, take $T = S^{n-1}$ and note that $w(T) \leq \sqrt{n}$.  Then the corollary states that, with high probability, 
\[
\left| \|A x\|_2 - \sqrt{m} \right| \leq C K^2 \sqrt{n} \qquad \text{for all }  x \in S^{n-1}.
\]  
This recovers the well-known result that, with high probability, all of the singular values of $A$ reside in the interval $[\sqrt{m} - C K^2 \sqrt{n}, \sqrt{m} + C K^2 \sqrt{n}]$ (see \cite{V_RMT}).  When $n K^4 \ll m$, all of the singular values concentrate around $\sqrt{m}$.  In other words, a tall random matrix is well conditioned with high probability.

\subsection{Johnson-Lindenstrauss lemma}

The Johnson-Lindenstrauss lemma \cite{JL} describes a simple and effective method of dimension reduction. It shows that a (finite) set of data vectors $\mathcal{X}$ belonging to a very high-dimensional space, $\R^n$, can be mapped to a much lower dimensional space while roughly preserving pairwise distances.  This is useful from a computational perspective since the storage space and the speed of computational tasks both improve in the lower dimensional space.  Further, the mapping can be done simply by multiplying each vector by the random matrix $A/\sqrt{m}$.  

The classic Johnson-Lindenstrauss lemma follows immediately from our results.  Indeed, take $T' = \mathcal{X} - \mathcal{X}$.  To construct $T$, remove the 0 vector from $T'$ and project all of the remaining vectors onto $S^{n-1}$ (by normalizing).  Since $T$ belongs to the sphere and has fewer than $|\mathcal{X}|^2$ elements, it is not hard to show that $\gamma(T) \leq C \sqrt{\log|\mathcal{X}|}$. Then by Corollary \ref{thm: on sphere}, with high probability, 
\[\sup_{x \in T} \abs{\frac{1}{\sqrt{m}} \|A x\|_2 -  1} \leq \frac{C K^2 \sqrt{\log|\mathcal{X}|}}{\sqrt{m}}.\]
Equivalently, for all $x, y \in \mathcal{X}$
\[(1 - \delta) \|x - y\|_2 \leq \frac{1}{\sqrt{m}} \|A (x - y)\|_2 \leq (1 + \delta) \|x - y \|_2, \qquad \delta = \frac{C K^2 \sqrt{\log|\mathcal{X}|}}{\sqrt{m}}.\]
This is the classic Johnson-Lindenstrauss lemma.  It shows that as long as $m \gg K^4 \log\abs{\mathcal{X}}$, the mapping $x \rightarrow Ax/\sqrt{m}$ nearly preserves pair-wise distances.  In other words, $\mathcal{X}$ may be embedded into a space of dimension slightly larger than $\log\abs{\mathcal{X}}$ while preserving distances. 

In contrast to the classic Johnson-Lindenstrauss lemma that applies only to finite sets $\mathcal{X}$, 
the argument above based on Corollary \ref{thm: on sphere} allows $\mathcal{X}$ to be infinite. 
In this case, the size of $\mathcal{X}$ is quantified using the notion of Gaussian width
instead of cardinality.

To get even more precise control of the geometry of $\mathcal{X}$ in Johnson-Lindenstrauss lemma,
we may use the local version of our results.  
To this end, apply Theorem \ref{thm: local matrix version} combined with Remark \ref{rmk: star shaped} to the set $T = \mathcal{X} - \mathcal{X}$.  This shows that with high probability, for all $x, y \in \mathcal{X}$,
\begin{equation}
\abs{\frac{1}{\sqrt{m}} \|A (x - y)\|_2 -  \|x - y\|_2} 
\leq \frac{C K^2 \gamma \left( \text{star}(\mathcal{X} - \mathcal{X}) \cap \|x - y\|_2 B_2^n \right)}{\sqrt{m}}. \label{eq: improve JL} 
\end{equation}
One may recover the classic Johnson-Lindenstrauss lemma from the above bound using the containment $\text{star}(\mathcal{X} - \mathcal{X}) \subset \text{cone}(\mathcal{X} - \mathcal{X})$.  However, the above result also applies to infinite sets, and further can benefit when $\mathcal{X} - \mathcal{X}$ has different structure at different scales, e.g., when $\mathcal{X}$ has clusters.

\subsection{Gordon's Escape Theorem}

In \cite{gordon1988milman}, Gordon answered the following question:  \textit{Let $T$ be an arbitrary subset of $S^{n-1}$.  What is the probability that a random subspace has nonempty intersection with $T$?}  Gordon showed that this probability is small provided that the codimension of the subspace exceeds $w(T)$.  
This result also follows from Corollary \ref{thm: on sphere} for a general model of random subspaces. 

Indeed, let $A$ be an isotropic, sub-gaussian $m \times n$ random matrix as in \eqref{eq: A}. Then its kernel $\ker A$ 
is a {\em random subspace} in $\R^n$ of dimension at least $n-m$.
Corollary \ref{thm: on sphere} implies that, with high probability,
\begin{equation}         \label{eq: kernel escapes}
\ker A \cap T = \emptyset
\end{equation}
provided that $m \geq C K^4 w(T)^2$. To see this, note that in this case 
Corollary \ref{thm: on sphere} yields that $\left| \|Ax\|_2 - \sqrt{m} \right| < \sqrt{m}$ for all $x \in T$,
so $\|Ax\|_2 > 0$ for all $x \in T$, which in turn is equivalent to \eqref{eq: kernel escapes}.

We also note that there is an equivalent version of the above result when $T$ is a cone.  Then, with high probability,
\begin{equation}  \label{eq: cone escape}
\ker A \cap T = \{0\}  \quad \text{provided that } \quad m \geq C K^4 \gamma(T \cap S^{n-1})^2. 
\end{equation}

The conical version follows from the spherical version by expanding the sphere into a cone.

\subsection{Sections of sets by random subspaces: the $M^*$ Theorem}

The $M^*$ theorem \cite{milman1985geometrical, milman1985random, pajor1986subspaces} answers the following question:  \textit{Let $T$ be an arbitrary subset of $\R^n$.  What is the diameter of the intersection of a random subspace with $T$?}  We may bound the radius of this intersection (which of course bounds the diameter) using our main results, 
and again for a general model of random subspaces. 

Indeed, let us consider the kernel of an $m \times n$ random matrix $A$ as in the previous section. 
By Theorem \ref{thm: deviation tail2} (see \eqref{eq: deviation tail 2 simple}), we have
\begin{equation}         \label{eq: main whp}
\sup_{x \in T} \left| \|Ax\|_2 - \sqrt{m} \|x\|_2 \right| \le C K^2 \gamma(T)
\end{equation}
with high probability.  
On the event that the above inequality holds, we may further restrict the supremum to $\ker A \cap T$, giving
\[
\sup_{x \in \ker A \cap T} \sqrt{m} \|x\|_2  \le C K^2 \gamma(T).
\]
The left-hand side is $\sqrt m$ times the radius of $T \cap \ker A$.  Thus, with high probability,
\begin{equation}         \label{eq: M*}
\rad(\ker A \cap T) 
\leq \frac{C K^2 \gamma(T)}{\sqrt{m}}.
\end{equation}
This is a classical form of the so-called $M^*$ estimate. It is typically used for sets $T$ that contain the origin.
In these cases, the Gaussian complexity $\gamma(T)$ can be replaced by Gaussian width $w(T)$. 
Indeed, \eqref{eq: complexity vs width} with $y=0$ implies that these two quantities are equivalent.

\subsection{The size of random linear images of sets}		\label{s: random image}

Another question that can be addressed using our main results is how the size of a set $T$ in $\R^n$
changes under the action of a random linear transformation $A : \R^n \to \R^m$.
Applying \eqref{eq: deviation tail 2 simple} and the triangle inequality, we obtain 
\begin{equation}         \label{eq: random image}
\rad(AT) \le \sqrt{m} \cdot \rad(T) + C K^2 \gamma(T)
\end{equation}
with high probability. 
This result has been known for random projections, where
$A = \sqrt{n} P$ and $P$ is the orthogonal projection onto a random $m$-dimensional subspace 
in $\R^n$ drawn according to the Haar measure on the Grassmanian, see \cite[Proposition 5.7.1]{AGM}.

It is also known that the bound \eqref{eq: random image} is sharp (up to absolute constant factor) 
even for random projections, see \cite[Section 5.7.1]{AGM}. This in particular implies optimality of
the bound in our main result, Theorem~\ref{thm: deviation}.

\subsection{Signal recovery from the constrained linear model}

The constrained linear model is the backbone of many statistical and signal processing problems.  It takes the form
\begin{equation}				\label{eq: model}
y = A x + z, \qquad x \in T,
\end{equation}
where $x \in T \subset \R^n$ is unknown, $y \in \R^m$ is a vector of known observations, 
the measurement matrix $A \in \R^{m \times n}$ is known, 
and $z \in \R^m$ is unknown noise which can be either fixed or random and independent of $A$.  

For example, in the statistical linear model, $A$ is a matrix of explanatory variables, and $x$ is a coefficient vector.  It is common to assume, or enforce, that only a small percentage of the explanatory variables are significant.  This is encoded by taking $T$ to be the set of vectors with less than $s$ non-zero entries, for some $s \leq n$.  In other words, $T$ encodes \textit{sparsity}. In another example, $y$ is a vector of MRI measurements \cite{lustig2007sparse}, in which case $x$ is the image to be constructed.  Natural images have quite a bit of structure, which may be enforced by bounding the total variation, or requiring sparsity in a certain dictionary, each of which gives a different constraint set $T$.  There are a plethora of other applications, with various constraint sets $T$, including low-rank matrices, low-rank tensors, non-negative matrices, and structured sparsity. In general, a goal of interest is to estimate $x$.  

When $T$ is a linear subspace, it is standard to estimate $x$ via least squares regression, and the performance of such an estimator is well known.  However, when $T$ is non-linear, the problem can become quite complicated, both in designing a tractable method to estimate $x$ and also analyzing the performance.  The field of compressed sensing \cite{foucart2013mathematical, eldar2012compressed} gives a comprehensive treatment of the case when $T$ encodes sparsity, showing that convex programming can be used to estimate $x$, and that enforcing the sparse structure this way gives a substantial improvement over least squares regression.  A main idea espoused in compressed sensing is that random matrices $A$ give near optimal recovery guarantees. 

Predating, but especially following, the works in compressed sensing, there have also been several works which tackle the general case, giving results for arbitrary $T$ \cite{lecue2013learning, thrampoulidis2014simple, oymak2013squared, oymak2013simple, amelunxen2014living, chandrasekaran2012convex, plan2015generalized, plan2014high}.  
The deviation inequalities of this paper allow for a general treatment as well. We will first show how to recover 
several known signal recovery results, and then give a new result in Section \ref{ssec: simultaneous analysis}.

Consider the constrained linear model \eqref{eq: model}. A simple and natural way to estimate the 
unknown signal $x$ is to solve the optimization problem
\begin{equation}
\label{eq: least squares}
\hat{x} := \arg \min_{x' \in T} \|A x' - y\|_2^2
\end{equation}
We note that depending on $T$, the constrained least squares problem \eqref{eq: least squares} may be 
computationally tractable or intractable. 
We do not focus on algorithmic issues here, but just note that $T$ may be replaced by a larger tractable
 set (e.g., convexified) to aid computation.

Our goal is to bound the Euclidean norm of the error 
$$
h := \hat{x} - x.
$$ 
Since $\hat{x}$ minimizes the squared error, we have
$\|A \hat{x} - y\|_2^2 \leq \|A x - y\|_2^2$.
Simplifying this, we obtain 
\begin{equation}
\label{eq: noisy constraints}
\|A h\|_2^2 \leq 2 \langle h, A^T z \rangle. 
\end{equation}
We now proceed to control $\|h\|_2$ depending on the structure of $T$.

\subsubsection{Exact recovery}

In the noiseless case where $z=0$, inequality \eqref{eq: noisy constraints} simplifies 
and we have
\begin{equation}
\label{eq: noiseless constraints}
h \in \ker A \cap (T-x).
\end{equation}
(The second constraint here follows since $h = \hat{x} - x$ and $\hat{x} \in T$.)

In many cases of interest, $T-x$ is a cone, or is contained in a cone, which is called the \textit{tangent cone} 
or \textit{descent cone}. Gordon-type inequality \eqref{eq: cone escape} then implies 
that $h = 0$, and thus we have {\em exact recovery} $\hat{x}=x$, 
provided that the number of observations $m$ significantly exceeds the Gaussian complexity of this cone: 
$m \ge C K^4 \gamma((T-x) \cap S^{n-1})^2$. 

For example, if $x$ is a sparse vector with $s$ non-zero entries, and $T$ is an appropriately scaled $\ell_1$ ball, then $T- x$ is contained in a tangent cone, $D$, satisfying $\gamma(D)^2 \leq C s\log(n/s)$.  This implies that $\hat{x} = x$ with high probability, provided $m \geq C K^4 s \log(n/s)$.

\subsubsection{Approximate recovery}

In the cases where $T-x$ is not a cone or cannot be extended to a narrow cone 
(for example, when $x$ lies in the interior of $T$), 
we can use the $M^*$ Theorem for the analysis of the error.
Indeed, combining \eqref{eq: noiseless constraints} with \eqref{eq: M*}, we obtain
$$
\|h\|_2 \leq \frac{C K^2 w(T)}{\sqrt{m}}.
$$
Here we also used that since $T-T$ contains the origin, we have $\gamma(T-T) \sim w(T)$ 
according to \eqref{eq: complexity vs width}.
In particular, this means that $x$ can be estimated up to an additive error of $\e$ in the Euclidean norm 
provided that the number of observations satisfies $m \ge C K^4 w(T)^2/\e^2$.

For a more detailed description of the $M^*$ Theorem, Gordon's Escape Theorem, and their implications for the constrained linear model, see \cite{V_Estimation}. 

\subsection{Model selection for constrained linear models}
\label{ssec: simultaneous analysis}

It is often unknown precisely what constraint set to use for the constrained linear model, and practitioners often experiment with different constraint sets to see which gives the best performance.  This is a form of model selection.  We focus on the case when the form of the set is known, but the scaling is unknown.  For example, in compressed sensing, it is common to assume that $x$ is \textit{compressible}, i.e., that it can be well approximated by setting most of its entries to 0.  This can be enforced by assuming that $x$ belongs to a scaled $\ell_p$ ball for some $p \in (0, 1]$.  However, generally it is not known what scaling to use for this $\ell_p$ ball.  

Despite this need, previous theory concentrates on controlling the error for one fixed choice of the scaling.  Thus, a practitioner who tries many different scalings cannot be sure that the error bounds will hold uniformly over all such scalings.  In this subsection, we remove this uncertainty by showing that the error in constrained least squares can be controlled simultaneously for an infinite number of scalings of the constraint set.  

Assume $x \in T$, but the precise scaling of $T$ is unknown.  Thus, $x$ is estimated using a scaled version of $T$:
\begin{equation}
\label{eq: scaled least squares}
\hat{x}_\lambda := \arg \min_{x' \in \lambda T} \|A x' - y\|_2^2, \qquad \lambda \geq 1.
\end{equation}

The  following corollary controls the estimation error.

\begin{corollary}
\label{cor:new}
Let $T$ be a convex, symmetric set.  Given $\lambda \geq 1$, let $\hat{x}_\lambda$ be the solution to \eqref{eq: scaled least squares}.  Let $h_\lambda := \hat{x}_\lambda - x$, let $v_\lambda = h_\lambda/(1 + \lambda)$, and let $\delta = \|v_\lambda\|_2$.  Then with probability at least $0.99$, the following occurs.  For every $\lambda \geq 1$, 
\begin{equation}
\label{complicated}
\delta 
\leq  \frac{C K^2 \gamma (T \cap \delta B_2^n)}{\sqrt{m}} 
	+ C K \sqrt{\frac{\gamma(T \cap \delta B_2^n) \cdot \|z\|_2}{m(1 + \lambda)}}.
\end{equation}
\end{corollary}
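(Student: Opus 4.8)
The plan is to combine the basic optimality inequality \eqref{eq: noisy constraints} with the local deviation bound of Theorem~\ref{thm: local matrix version} applied to the descent-type set, and then to deal with the noise term by a separate uniform bound over scalings. First I would fix $\lambda \ge 1$ and rewrite \eqref{eq: noisy constraints} in terms of $h_\lambda = \hat x_\lambda - x$: we have $\|A h_\lambda\|_2^2 \le 2\langle h_\lambda, A^\tran z\rangle$. The key geometric observation is that since $T$ is convex and symmetric, both $x \in T \subseteq \lambda T$ and $\hat x_\lambda \in \lambda T$, so $v_\lambda := h_\lambda/(1+\lambda) = (\hat x_\lambda - x)/(1+\lambda)$ is a convex combination of a point in $\tfrac{1}{1+\lambda}\lambda T$ and a point in $\tfrac{1}{1+\lambda}(-T)$, both contained in $\lambda T$; more usefully, $v_\lambda$ lies in the star-shaped hull of (a fixed dilate of) $T$. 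The point is to renormalize so that the relevant set no longer depends on $\lambda$: $v_\lambda \in \text{star}(T)$ (after absorbing the harmless factor, using $\lambda \ge 1$), and $\|v_\lambda\|_2 = \delta$.

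Next I would apply Theorem~\ref{thm: local matrix version} (with Remark~\ref{rmk: star shaped}) to the star-shaped hull of $T$, evaluated at the point $v_\lambda$ whose norm is $\delta$. This gives, on an event of probability at least $1 - \exp(-t^2)$ holding simultaneously for \emph{all} points of $\text{star}(T)$ (hence for all $\lambda$ at once), the two-sided bound
\[
\Big| \|A v_\lambda\|_2 - \sqrt m \,\|v_\lambda\|_2 \Big| \le t \cdot C K^2 \, \gamma\big( \text{star}(T) \cap \delta B_2^n \big) = t\cdot CK^2\,\gamma(T \cap \delta B_2^n),
\]
where in the last step I would use that for star-shaped $T$ the intersection with a ball is unchanged, or else note $\gamma(\text{star}(T)\cap \delta B_2^n) \lesssim \gamma(T \cap \delta B_2^n)$ for convex symmetric $T$. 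Rescaling by $1+\lambda$, this controls $\|Ah_\lambda\|_2$ from below by $\sqrt m\,\delta(1+\lambda) - (1+\lambda) t C K^2 \gamma(T\cap \delta B_2^n)$, so from $\|Ah_\lambda\|_2^2 \le 2\langle h_\lambda, A^\tran z\rangle$ I get a quadratic-type inequality in $\delta$.

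For the right-hand side $\langle h_\lambda, A^\tran z\rangle = (1+\lambda)\langle v_\lambda, A^\tran z\rangle$, I would bound it uniformly over $v \in \text{star}(T)\cap \delta B_2^n$. Conditionally on $A$, the vector $A^\tran z$ is sub-gaussian, and $\sup_{v \in T\cap \delta B_2^n}\langle v, A^\tran z\rangle$ concentrates around $\E \sup \approx \|z\|_2 \cdot \gamma(T\cap \delta B_2^n)/\sqrt{?}$; more carefully, $\langle v, A^\tran z\rangle = \langle Av, z\rangle$ and one expects $\sup_{v}\langle Av, z\rangle \lesssim \|z\|_2 \cdot \big(\sqrt m \,\delta + K\sqrt{?}\big)$... the cleanest route is: condition on $z$, observe $A^\tran z$ is a sub-gaussian vector with $\psi_2$-norm $\lesssim K\|z\|_2$, so a standard chaining/Gaussian-comparison bound gives $\sup_{v \in T\cap\delta B_2^n}\langle v, A^\tran z\rangle \lesssim K\|z\|_2 \big(\gamma(T\cap \delta B_2^n) + t\delta\big)$ with probability $1-\exp(-t^2)$. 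Plugging this in, \eqref{eq: noisy constraints} becomes
\[
\big(\sqrt m\,\delta - tCK^2\gamma(T\cap\delta B_2^n)\big)_+^2 (1+\lambda)^2 \le 2(1+\lambda) K\|z\|_2\big(\gamma(T\cap\delta B_2^n) + t\delta\big),
\]
and solving this for $\delta$ (treating $\gamma(T\cap\delta B_2^n)$ as a quantity to be carried along, and using $t=O(1)$ for the $99\%$ probability) yields \eqref{complicated} after routine algebra, with the first term coming from the noiseless balance $\sqrt m\,\delta \lesssim K^2\gamma$ and the second from the noise term $\delta^2 m(1+\lambda) \lesssim K\|z\|_2\gamma \cdot$.

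The main obstacle I expect is handling the noise term with the \emph{correct} $\lambda$-dependence: naively bounding $\langle h_\lambda, A^\tran z\rangle$ loses the factor $(1+\lambda)$ that is essential for the $\sqrt{1/(1+\lambda)}$ decay in \eqref{complicated}, so one must be careful to pass to $v_\lambda = h_\lambda/(1+\lambda)$ \emph{before} taking suprema, and to get a deviation bound for $\sup_{v\in T\cap\delta B_2^n}\langle Av,z\rangle$ that is uniform in $\delta$ (equivalently, a bound valid simultaneously at every scale, which again follows from a local/homogeneous argument analogous to Theorem~\ref{thm: local version} applied to the linear process $v\mapsto \langle Av,z\rangle$). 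A secondary subtlety is that $\delta$ appears on both sides through $\gamma(T\cap \delta B_2^n)$, so \eqref{complicated} is really an implicit inequality; but since $\delta \mapsto \gamma(T\cap \delta B_2^n)$ is non-decreasing and sublinear, no fixed-point difficulty arises and the stated form is exactly what the algebra produces.
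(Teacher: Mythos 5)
Your proposal matches the paper's proof in all essential respects: you derive $v_\lambda \in T$ (equivalently, $v_\lambda \in \mathrm{star}(T)$) from convexity and symmetry, apply Theorem~\ref{thm: local matrix version} to get a lower bound on $\|Av_\lambda\|_2$ uniform over all scales, and recognize---especially in your closing remarks---that the noise term $\langle v_\lambda, A^\tran z\rangle$ must also be bounded via the local Theorem~\ref{thm: local version} applied to the linear sub-gaussian process $x \mapsto \langle x, A^\tran z\rangle$, which is precisely what the paper does with $w = A^\tran z/\|z\|_2$. The only cosmetic difference is that your intermediate quadratic carries an extraneous $t\delta$ term from first invoking a non-local tail bound before self-correcting to the local one; once the local version is used (as you note is necessary), that term disappears and the algebra gives exactly \eqref{complicated}.
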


The corollary is proven using Theorem \ref{thm: local matrix version}.  To our knowledge, this corollary is new.  It recovers previous results that only apply to a single, fixed $\lambda$, as in \cite{plan2015generalized, lecue2013learning}.  It is known to be nearly minimax optimal for many constraint sets of interest and for stochastic noise term $z$, in which case $\|z\|_2$ would be replaced by its expected value \cite{plan2014high}.  

The rather complex bound of Equation \eqref{complicated} seems necessary in order to allow generality.  To aid understanding, we specialize the result to a very simple set---a linear subspace---for which the behaviour of constrained least squares is well known, the scaling becomes irrelevant, and the result simplifies significantly.  When $T$ is a $d$-dimensional subspace, we may bound the Gaussian complexity as $\gamma(T \cap \delta B_2) \leq \delta \sqrt{d}$.    Plugging in the bound on $\gamma(T \cap \delta B_2^n)$ into \eqref{complicated}, substituting $h_\lambda$ back in, and massaging the equation gives
\[\|h_\lambda\|_2^2 \leq C K^4 \cdot \frac{d \|z\|_2^2}{m^2} \qquad \text{as long as } \quad m \geq C K^4 d.\]
If $z$ is Gaussian noise with standard deviation $\sigma$, then it's norm concentrates around $\sqrt{m} \sigma$, giving (with high probability)
 \[\|h_\lambda\|_2^2 \leq C K^4 \cdot\frac{d \sigma^2}{m} \qquad \text{as long as } \quad m \geq C K^4 d.\]
In other words, the performance of least squares is proportional to the noise level multiplied by the dimension of the subspace, and divided by the number of observations, $m$.  This is well known. 

In this corollary,  for simplicity we assumed that $T$ is convex and symmetric.  Note that this already allows constraint sets of interest, such as the $\ell_1$ ball.  However, this assumption can be weakened.  All that is needed is for $T - \lambda T$ to be contained in a scaled version of $T$, and to be star shaped.  This also holds, albeit for more complex scalings, for arbitrary $\ell_p$ balls with $p > 0$.  

\begin{proof}[of Corollary~\ref{cor:new}]
For simplicity of notation, we assume $K \leq 10$ (say), and absorb $K$ into other constants.  
The general case follows the same proof.  
First note that $h_\lambda \in \lambda T - T$.
Since $T$ is convex and symmetric,
we have
$\lambda T - T \subset (1 + \lambda) T  $
and as $v_\lambda= h_\lambda/(1 + \lambda)$,
we get 
\begin{equation}
\label{first}
v_\lambda  \in T.
\end{equation} 
Moreover, \eqref{eq: noisy constraints} gives
\begin{equation}
\label{eq: starting v}
\|A v_\lambda\|_2^2 \leq \frac{\langle v_\lambda, A^T z \rangle }{1 + \lambda}, \qquad v_\lambda \in T.
\end{equation}
We will show that, with high probability, any vector $v_\lambda$ satisfying (\ref{first}) and (\ref{eq: starting v}) has a small norm, thus completing the proof.  We will do this by upper bounding $\langle v_\lambda, A^T z \rangle$ and lower bounding $\|A v_\lambda\|_2$ by $\|v_\lambda\|_2$ minus a deviation term.

For the former goal, let $w := A^T z/\|z\|_2$. Recall that the noise vector $z$ is fixed (and in case $z$ random and independent 
of $A$, condition on $z$ to make it fixed).  
Then $w$ is a sub-gaussian vector with independent entries whose sub-gaussian norm is upper-bounded by a constant; 
see \cite{V_RMT}.  Thus, the random process $Z_x := \langle x, w \rangle$ has the sub-gaussian increments required in Theorem \ref{thm: local version} (again, see \cite{V_RMT}).  
By this theorem, with probability $\geq 0.995$, 
\[\abs{Z_x} \leq C \gamma(T \cap \|x\|_2 B_2^n) \qquad \text{for all } x \in T.\]
Let $F$ be the `good' event that the above equation holds.

To control $\|A v_\lambda\|_2$, consider the `good' event $G$ that
\[\|A x\|_2 \geq \sqrt{m} \|x\|_2 - C \gamma(T \cap \|x\|_2 B_2^n) \qquad  \text{for all } x \in T.\]
By Theorem \ref{thm: local matrix version}, $G$ holds with probability at least $0.995$.  

Now, suppose that both $G$ and $F$ hold (which occurs with probability at least $0.99$ by the union bound).  We will show that for every $\lambda > 1$, $v_\lambda$ is controlled. 
The event $G$ gives
\[\langle v_\lambda, A^T z \rangle \leq  C \gamma(T \cap \|v_\lambda\|_2 B_2^n) \cdot \|z\|_2.\]
The event $F$ gives
\[\|A v_\lambda\|_2 \geq \sqrt{m} \|v_\lambda\|_2 - C \gamma(T \cap \|v_\lambda\|_2 B_2^n).\]
Taking square roots of both sides of \eqref{eq: starting v} and plugging in these two inequalities gives~\eqref{complicated}.\qed 
\end{proof}

\section{Comparison with known results}
\label{sec:compare}


Several partial cases of our main results have been known. 
As we already mentioned, the special case of Theorem~\ref{thm: deviation} 
where the entries of $A$ have standard normal distribution follows 
from the main result of the paper by G.~Schechtman \cite{Schechtman}.

Generalizing the result of \cite{Schechtman}, B.~Klartag and S.~Mendelson proved the following theorem. 

\begin{theorem}[Theorem~4.1 in~\cite{KM}]\label{thm:km4.1}
Let $A$ be an isotropic, sub-gaussian random matrix as in \eqref{eq: A}, and let $T \subseteq S^{n-1}$.
Assume that $w(T) \ge C'(K)$.\footnote{This restriction is not explicitly mentioned in the 
  statement of Theorem~4.1 in~\cite{KM}, but it is used in the proof. Indeed, this result is 
  derived from their Theorem~1.3, which explicitly requires that $\gamma(T)$ be large enough.
  Without such requirement, Theorem~4.1 in~\cite{KM} fails e.g. when $T$ is a singleton, 
  since in that case we have $w(T)=0$.} Then with probability larger than $1/2$,
\begin{equation}
\label{km4.1}
\sup_{x \in T} \left| \norm{A x} - \sqrt m \right|
\leq C(K) w(T).
\end{equation}
Here $C'(K)$ and $C(K)$ may depend on $K$ only.
\end{theorem}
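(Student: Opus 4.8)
The plan is to obtain Theorem~\ref{thm:km4.1} as an essentially immediate corollary of Corollary~\ref{cor: on sphere} (equivalently, of Theorem~\ref{thm: deviation tail2}), which we may freely assume. The key observation is that the hypothesis $T \subseteq S^{n-1}$ makes the intersection with the sphere in those statements vacuous: $T \cap S^{n-1} = T$, so $w(T \cap S^{n-1}) = w(T)$ and $\rad(T) = 1$. The only thing that then has to be done is to absorb the additive term $u$ (respectively $u\cdot\rad(T)$) appearing in those results into $w(T)$, and this is precisely what the side condition $w(T) \ge C'(K)$ is there for.

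Concretely, I would argue as follows. First, fix a numerical constant $u_0$ large enough that $1 - \exp(-u_0^2) > 1/2$; for instance $u_0 = 1$ works since $1 - e^{-1} > 1/2$. Applying Corollary~\ref{cor: on sphere} to the set $T$ with $u = u_0$, we obtain that on an event of probability at least $1 - \exp(-u_0^2) > 1/2$,
\[
\sup_{x \in T} \left| \norm{Ax} - \sqrt m \right|
\;=\; \sup_{x \in T \cap S^{n-1}} \left| \norm{Ax} - \sqrt m \right|
\;\le\; C K^2 \big[\, w(T \cap S^{n-1}) + u_0 \,\big]
\;=\; C K^2 \big[\, w(T) + u_0 \,\big].
\]
Second, invoke the standing assumption $w(T) \ge C'(K)$; taking $C'(K) := u_0\,(=1)$ — which is permissible, since the theorem only requires \emph{some} threshold depending on $K$ — gives $w(T) + u_0 \le 2\,w(T)$, and hence
\[
\sup_{x \in T} \left| \norm{Ax} - \sqrt m \right| \;\le\; 2 C K^2 \, w(T)
\]
on the same event. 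Setting $C(K) := 2 C K^2$ yields \eqref{km4.1}.

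There is no genuine obstacle in this argument; all the work is already packaged inside Corollary~\ref{cor: on sphere} (whose proof, in turn, rests on Theorem~\ref{thm: increments} and Talagrand's theorem). The one point worth flagging — more a sanity check than a difficulty — is the necessity of the side condition $w(T) \ge C'(K)$: it cannot be removed, since for a singleton $T = \{x_0\} \subseteq S^{n-1}$ one has $w(T) = 0$ while $\big|\norm{Ax_0} - \sqrt m\big|$ does not tend to $0$ with $m$ (with probability bounded away from zero), so the bound $C(K)\,w(T) = 0$ is violated. This is exactly the phenomenon for which the term $u\cdot\rad(T)$ is built into Theorem~\ref{thm: deviation tail2}. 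I would also remark that the derivation in fact yields slightly more than the stated Theorem~\ref{thm:km4.1}: the threshold $C'(K)$ need not depend on $K$ (one may take it to be the absolute constant $1$), and the failure probability can be made $\exp(-u^2)$ for arbitrary $u \ge 1$ at the cost of replacing $C(K)$ by $C K^2(1+u)$, so ``probability larger than $1/2$'' upgrades to ``probability at least $1 - \exp(-\Omega(1))$''.
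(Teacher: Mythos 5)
Your derivation is correct and follows exactly the route the paper sketches in Section~\ref{sec:compare}: the paper does not re-prove the Klartag--Mendelson theorem but instead observes that Corollary~\ref{cor: on sphere} ``gives the same conclusion,'' and your argument simply fills in the (trivial) details of that implication, using $T \cap S^{n-1} = T$, the choice $u_0 = 1$, and the hypothesis $w(T) \geq C'(K)$ to absorb the additive term. Your concluding remarks --- that $C'(K)$ can be taken to be an absolute constant and that the failure probability can be made much smaller --- are exactly the sense in which the paper claims its version is ``slightly more informative.''
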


A similar but slightly more informative statement follows from our main results. 
Indeed, Corollary~\ref{cor: on sphere} gives the same conclusion, but with explicit dependence 
on $K$ (the sub-gaussian norms of the rows of $A$) as well as probability of success.
Moreover, our general results, Theorems~\ref{thm: deviation} and \ref{thm: deviation tail2}, 
do not require the set $T$ to lie on the unit sphere. 

Another related result was proved by S.~Mendelson, A.~Pajor, and N.~Tomczak-Jaegermann. 

\begin{theorem}[Theorem~2.3 in~\cite{MPT}]
\label{thm:mpt2.3}
Let $A$ be an isotropic, sub-gaussian random matrix as in \eqref{eq: A}, and 
$T$ be a star-shaped subset of $\R^n$.
Let $0<\theta<1$.
Then with probability at least $1 - \exp(-c \theta^2 m / K^4)$ we have that 
all vectors $x\in T$ with $$\norm{x} \geq 
r^* := \inf \left\{  
\rho>0 : \rho \geq CK^2 \gamma\left( T \cap \rho \cdot S^{n-1} \right) / (\theta \sqrt m)
\right\}$$ 
satisfy
$$
(1-\theta) \norm{x}^2 \leq \frac{\norm{Ax}^2}{m}
\leq
(1+\theta) \norm{x}^2 .$$
\end{theorem}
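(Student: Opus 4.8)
The plan is to derive Theorem~\ref{thm:mpt2.3} from the two-sided deviation bound already proved in this paper --- Theorem~\ref{thm: deviation tail2}, equivalently Corollary~\ref{cor: on sphere} --- after a reduction to a subset of the unit sphere. Both inequalities $\norm{Ax}^2\le(1+\theta)m\|x\|_2^2$ and $\norm{Ax}^2\ge(1-\theta)m\|x\|_2^2$ will come out of a single application, because the quantity $\norm{Ax}/\|x\|_2$ is scale-invariant.

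The central object is the normalized set
\[
   \widehat{T}\ :=\ \bigl\{\, x/\|x\|_2 \ :\ x\in T,\ \|x\|_2>r^* \,\bigr\}\ \subseteq\ S^{n-1}.
\]
First I would record two elementary facts about a star-shaped $T$. (i) The family $\rho\mapsto\tfrac1\rho\bigl(T\cap\rho S^{n-1}\bigr)$ grows as $\rho$ decreases: if $\rho\le\rho'$ and $\rho'v\in T$ with $\|v\|_2=1$, then $\rho v=\tfrac{\rho}{\rho'}(\rho'v)\in T$ since $\tfrac{\rho}{\rho'}\le1$. Hence $\gamma\bigl(T\cap\rho S^{n-1}\bigr)/\rho$ is nonincreasing in $\rho$ (Gaussian complexity is monotone under inclusion and $1$-homogeneous), and therefore the definition of $r^*$ forces $\gamma\bigl(T\cap\rho S^{n-1}\bigr)/\rho\le\theta\sqrt m/(CK^2)$ for every $\rho>r^*$. (ii) Since $\widehat{T}\subseteq\bigcup_{\rho>r^*}\tfrac1\rho\bigl(T\cap\rho S^{n-1}\bigr)$, which by (i) is an increasing union as $\rho\downarrow r^*$, monotone convergence gives $w\bigl(\widehat{T}\bigr)\le\sup_{\rho>r^*}w\bigl(T\cap\rho S^{n-1}\bigr)/\rho$, and combining this with $w\le\gamma$ and (i) yields the key estimate $w\bigl(\widehat{T}\bigr)\le\theta\sqrt m/(CK^2)$.

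Next I would apply Theorem~\ref{thm: deviation tail2} to $\widehat{T}$, a bounded subset of $S^{n-1}$ with $\rad(\widehat{T})=1$, using the parameter $u:=c\,\theta\sqrt m/K^2$. Since $\|\widehat x\|_2=1$ on $\widehat{T}$, this gives, with probability at least $1-e^{-u^2}=1-e^{-c^2\theta^2m/K^4}$,
\[
   \sup_{\widehat x\in\widehat{T}}\bigl|\,\norm{A\widehat x}-\sqrt m\,\bigr|\ \le\ C_0K^2\bigl[\,w(\widehat{T})+u\,\bigr]\ \le\ \tfrac1{50}\,\theta\sqrt m,
\]
where $C_0$ is the absolute constant of Theorem~\ref{thm: deviation tail2}; the last step holds provided the constant $C$ in the definition of $r^*$ is a large enough multiple of $C_0$ and $c$ is small enough. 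On this event, for any $x\in T$ with $\|x\|_2>r^*$, put $\widehat x=x/\|x\|_2\in\widehat{T}$ and use homogeneity: $\bigl|\,\norm{Ax}/\|x\|_2-\sqrt m\,\bigr|\le\tfrac1{50}\theta\sqrt m$, hence $\norm{Ax}^2/\|x\|_2^2\in\bigl[(1-\tfrac1{50}\theta)^2m,\ (1+\tfrac1{50}\theta)^2m\bigr]\subseteq\bigl[(1-\theta)m,\ (1+\theta)m\bigr]$ since $\theta<1$. Multiplying by $\|x\|_2^2$ is the conclusion of the theorem.

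The only real work is thus the reduction, and two points deserve care. First, one must pass to the \emph{sphere} slices $T\cap\rho S^{n-1}$ and normalize: feeding the local estimate (Theorem~\ref{thm: local matrix version}) the balls $T\cap\|x\|_2B_2^n$ instead would cost a factor in the exponent, since $\gamma(T\cap\|x\|_2B_2^n)$ can be much larger than $\gamma(T\cap\|x\|_2S^{n-1})$ for a star-shaped $T$ --- for instance when $T$ contains a small Euclidean ball around the origin. Second, the width bound on $\widehat{T}$ is available only for $\rho$ strictly above $r^*$, which is why the statement is cleanest with $\|x\|_2$ bounded away from $r^*$ (and with $C$ chosen accordingly); at $\|x\|_2=r^*$ the slice $T\cap r^*S^{n-1}$ may itself be large, as a direct smallest-singular-value computation on that slice shows. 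I expect this interplay between the critical radius $r^*$ and the admissible scales to be the main technical point; everything else is outsourced to Theorem~\ref{thm: deviation tail2}, whose own proof rests on the sub-gaussian increment property of Theorem~\ref{thm: increments}. (The original argument in~\cite{MPT} instead runs a chaining/small-ball argument; the route above is shorter given the machinery of this paper.)
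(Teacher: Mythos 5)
Your argument is correct and follows the same route the paper indicates: reduce to an application of Theorem~\ref{thm: deviation tail2} on a normalized spherical slice of $T$, use the star-shaped monotonicity of $\rho\mapsto\gamma(T\cap\rho S^{n-1})/\rho$ together with the definition of $r^*$ to bound the width, and then invoke homogeneity of $\norm{Ax}/\|x\|_2$ to cover all $x\in T$ with $\|x\|_2$ above the critical radius. The paper's one-line sketch applies Theorem~\ref{thm: deviation tail2} directly to $T\cap r^*S^{n-1}$ rather than to your normalized union $\widehat T$; the two are equivalent up to the boundary technicality you correctly flag, namely whether the defining inequality for $r^*$ holds at $\rho=r^*$ itself or only for $\rho>r^*$.
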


Applying our Theorem~\ref{thm: deviation tail2} to the bounded set 
$T \cap r^* \cdot S^{n-1}$
precisely implies Theorem~\ref{thm:mpt2.3} with the same failure probability 
(up to the values of the absolute constants $c, C$).
Moreover, our Theorem~\ref{thm: deviation tail2} 
treats all $x\in T$ uniformly, 
whereas Theorem~\ref{thm:mpt2.3} works only for $x$ with large norm.

%
%
%

Yet another relevant result was proved by Dirksen \cite[Theorem 5.5]{Dirksen}.
He showed that the inequality
\begin{multline}
\label{dirksen}
\left| \|Ax\|_2^2 - m\|x\|_2^2 \right| 
  \lesssim K^2 w(T)^2 + \sqrt{m} K^2 \rad(T) w(T) \\
  + u \sqrt{m} K^2 \rad(T)^2 + u^2 K^2 \rad(T)^2
\end{multline}
holds uniformly over $x \in T$ with probability at least $1-\exp(-u^2)$.
To compare with our results, one can see that Theorem~\ref{thm: deviation tail2} implies that, 
with the same probability,
\begin{multline*}
\left| \|Ax\|_2^2 - m\|x\|_2^2 \right| 
  \lesssim K^4 w(T)^2 + \sqrt{m} K^2 \|x\|_2 w(T) \\
  + u \sqrt{m} K^2 \rad(T)\|x\|_2 + uK^4 \rad(T) w(T) + u^2 K^4 \rad(T)^2,
\end{multline*}
which is stronger than~(\ref{dirksen}) when $K = O(1)$ and $m \gtrsim n$, since then $\|x\|_2 \leq
\rad(T)$ and $w(T) \lesssim \sqrt{m} \rad(T)$.

\section{Preliminaries}

\subsection{Majorizing Measure Theorem, and deduction of Theorems~\ref{thm: deviation} and \ref{thm: deviation tail2}. }		\label{s: talagrand}

As we mentioned in the Introduction, Theorems~\ref{thm: deviation} and \ref{thm: deviation tail2}
follow from Theorem~\ref{thm: increments} via Talagrand's Majorizing Measure Theorem (and 
its high-probability counterpart). Let us state this theorem specializing to processes that are indexed by 
points in $\R^n$.
For $T\subset \R^n$, let
$\diam(T) \coloneqq \sup_{x,y\in T}
\norm{x-y}$.

\begin{theorem}[Majorizing Measure Theorem]					\label{thm: talagrand}
  Consider a random process $(Z_x)_{x \in T}$ indexed by points $x$ in a bounded set $T \subset \R^n$.
  Assume that the process has sub-gaussian increments, that is there exists $M \ge 0$ such that 
  \begin{equation}
  \label{talagrand_assumption}
  \|Z_x - Z_y\|_\psitwo \le M \|x-y\|_2 \quad \textnormal{for every } x,y \in T.
  \end{equation}
  Then
  $$
  \E \sup_{x,y \in T} |Z_x - Z_y| \le C M \E \sup_{x \in T} \ip{g}{x},
  $$
  where $g \sim N(0,I_n)$.
  Moreover, for any $u \ge 0$, the event 
  $$
  \sup_{x,y \in T} |Z_x-Z_y| \le C M \big[ \E \sup_{x \in T} \ip{g}{x} + u \diam(T) \big]
  $$
  holds with probability at least $1-\exp(-u^2)$.
\end{theorem}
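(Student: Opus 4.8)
The plan is to obtain Theorem~\ref{thm: talagrand} by assembling two well-known ingredients from Talagrand's theory of generic chaining: first, the chaining \emph{upper} bound, which controls a process with sub-gaussian increments by the $\gamma_2$ functional of the index metric space, and second, the \emph{majorizing measure} comparison, which bounds $\gamma_2$ of a subset of Euclidean space by the expected supremum of the canonical Gaussian process on it. No new estimate is needed; the content is the faithful specialization of these statements to processes indexed by points of $\R^n$ equipped with the Euclidean metric.

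Recall Talagrand's functional: for a metric space $(T,d)$,
\[
\gamma_2(T,d) := \inf \, \sup_{t \in T} \, \sum_{k \ge 0} 2^{k/2} \, d\big(t, T_k\big),
\]
the infimum taken over all \emph{admissible} sequences of subsets $(T_k)_{k \ge 0}$ of $T$ with $|T_0| = 1$ and $|T_k| \le 2^{2^k}$ for $k \ge 1$. Talagrand's generic chaining theorem asserts that if $(Z_x)_{x \in T}$ satisfies the sub-gaussian increment condition $\|Z_x - Z_y\|_{\psitwo} \le M \, d(x,y)$, then
\[
\E \sup_{x,y \in T} |Z_x - Z_y| \le C M \, \gamma_2(T,d),
\]
and, in its high-probability form, that for every $u \ge 0$ the event
\[
\sup_{x,y \in T} |Z_x - Z_y| \le C M \big[ \gamma_2(T,d) + u \, \diam(T) \big]
\]
holds with probability at least $1 - \exp(-u^2)$. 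I would quote both of these directly; the only bookkeeping point is that the $\psitwo$-formulation of the hypothesis in \eqref{talagrand_assumption} is equivalent, up to an absolute constant, to the tail formulation $\Pr{|Z_x - Z_y| \ge s} \le 2\exp\big(-s^2/(M d(x,y))^2\big)$ under which these statements are usually recorded, so nothing is lost in the translation. The chaining bound itself is elementary: it is a union bound over a sequence of successively refined nets, whose cardinalities are exactly the $2^{2^k}$ appearing in the definition of $\gamma_2$.

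Now specialize to $T \subset \R^n$ with $d(x,y) = \|x-y\|_2$. It remains only to compare $\gamma_2(T, \|\cdot\|_2)$ with $\E \sup_{x \in T} \ip{g}{x}$, where $g \sim N(0,I_n)$. Consider the canonical Gaussian process $G_x := \ip{g}{x}$ on $T$; its increments satisfy $\|G_x - G_y\|_{L^2} = \|x-y\|_2$, hence also $\|G_x - G_y\|_{\psitwo} \le C\|x-y\|_2$. The majorizing measure theorem for Gaussian processes — that the $\gamma_2$ functional of the index set of a Gaussian process is dominated by the expected supremum of that process — then gives
\[
\gamma_2\big(T, \|\cdot\|_2\big) \le C \, \E \sup_{x \in T} \ip{g}{x}.
\]
Substituting this into the two displays of the previous paragraph yields both assertions of Theorem~\ref{thm: talagrand}, after adjusting the absolute constant (and using $\sup_{x,y}|Z_x - Z_y| \ge \sup_x |Z_x - Z_{x_0}|$ for a fixed root $x_0$ to pass to a one-sided supremum if one prefers to state it that way).

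The single substantive step is the last comparison $\gamma_2 \lesssim \E\sup_x \ip{g}{x}$, which is Talagrand's majorizing measure theorem and which I would invoke as a black box, citing his book; everything else is the chaining union bound and the routine passage between the two normalizations of ``sub-gaussian increments''. In that sense there is no real obstacle here — the difficulty has been exported to Talagrand's theorem — and the only care required is in matching hypotheses: confirming that \eqref{talagrand_assumption} is precisely the generic-chaining hypothesis, and that boundedness of $T$ (so that $\gamma_2(T,\|\cdot\|_2)$ and $\diam(T)$ are finite) is all the regularity the Gaussian comparison needs.
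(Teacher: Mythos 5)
Your proposal is correct and takes essentially the same approach as the paper, which simply cites Talagrand's book for the expectation bound and Dirksen for the tail bound without reproving them. You usefully unpack the black box into the elementary generic-chaining estimate in terms of $\gamma_2(T,\|\cdot\|_2)$ plus the deep Gaussian comparison $\gamma_2 \lesssim \E\sup_{x\in T}\ip{g}{x}$, but the substance is the same citation to Talagrand's theorem that the paper makes.
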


The first part of this theorem can be found e.g. in \cite[Theorems~2.1.1, 2.1.5]{Talagrand}. 
The second part, a high-probability bound, is borrowed from \cite[Theorem~3.2]{Dirksen}.

\bigskip 

Let us show how to deduce Theorems~\ref{thm: deviation} and \ref{thm: deviation tail2}. 
According to Theorem~\ref{thm: increments}, the random process $Z_x := \|Ax\|_2 -  \sqrt{m} \|x\|_2 $ 
satisfies the hypothesis \eqref{talagrand_assumption} of the Majorizing Measure Theorem~\ref{thm: talagrand} with $M=CK^2$.
Fix an arbitrary $y \in T$ and use the triangle inequality to obtain
\begin{equation}\label{eq: important label}
\E \sup_{x \in T} |Z_x| \le \E \sup_{x \in T} |Z_x - Z_y| + \E |Z_y|. 
\end{equation}
Majorizing Measure Theorem bounds the first term: $\E \sup_{x \in T} |Z_x - Z_y| \lesssim K^2 w(T)$.
(We suppress absolute constant factors in this inequality and below.)
The second term can be bounded more easily as follows:
$\E |Z_y| \lesssim \|Z_y\|_\psitwo \lesssim K^2 \|y\|_2$, where we again used Theorem~\ref{thm: increments}
with $x=0$. Using \eqref{eq: complexity vs width}, we conclude that 
$$
\E \sup_{x \in T} |Z_x| \lesssim K^2 (w(T) + \|y\|_2) \lesssim K^2 \gamma(T),
$$
as claimed in Theorem~\ref{thm: deviation}.

We now prove Theorem~\ref{thm: deviation tail2}.
Since adding $0$ to a set does not change its radius, we may assume that $0 \in T$.
Let $Z_x := \norm{Ax} - \sqrt{m}\norm{x}$.
Since $Z_0=0$, and since $Z_x$ has sub-gaussian increments by Theorem~\ref{thm: increments}, Theorem~\ref{thm: talagrand} gives that with probability at least $1-\exp(-u^2)$,
\begin{align*}
\sup_{x\in T} \abs{Z_x}
= 
\sup_{x\in T} \abs{Z_x-Z_0}
& \lesssim 
K^2 \big[  \E \sup_{x \in T} \ip{g}{x} + u \cdot \diam(T) \big] \\
& \lesssim 
K^2 \big[  \E \sup_{x \in T} \ip{g}{x} + u \cdot \rad(T) \big].\qed
\end{align*}


\subsection{Sub-exponential random variables, and Bernstein's inequality}

Our argument will make an essential use of Bernstein's inequality for sub-exponential 
random variables. Let us briefly recall the relevant notions, which can be found, e.g., in \cite{V_RMT}.
A random variable $Z$ is {\em sub-exponential} if its distribution is dominated by an 
exponential distribution. More formally, $Z$ is sub-exponential if the Orlicz norm 
$$
\|Z\|_\psione := \inf \big\{ K>0 :\; \E \psi_1(|Z|/K) \le 1 \} 
$$
is finite, for the Orlicz function $\psi_1(x) = \exp(x) - 1$.  
Every sub-gaussian random variable is sub-exponential. Moreover, an application of Young's inequality 
implies the following relation for any two sub-gaussian random variables $X$ and $Y$:
\begin{equation}         \label{eq: XY}
\|XY\|_\psione \le \|X\|_\psitwo \|Y\|_\psitwo.
\end{equation}


The classical Bernstein's inequality states that a sum of independent sub-exponential random variables
is dominated by a mixture of sub-gaussian and sub-exponential distributions. 

\begin{theorem}[Bernstein-type deviation inequality, see e.g. \cite{V_RMT}]			\label{thm: bernstein}
  Let $X_1,\ldots,X_m$ be independent random variables, 
  which satisfy $\E X_i = 0$ and $\|X_i\|_\psione \le L$.
  Then
  $$
  \Pr{ \Big| \frac{1}{m} \sum_{i=1}^m X_i \Big| > t } 
  \le 2 \exp \Big[ - c m \min \Big(\frac{t^2}{L^2}, \, \frac{t}{L} \Big) \Big], \quad t \ge 0.
  $$
\end{theorem}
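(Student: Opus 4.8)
The plan is to use the classical exponential moment (Chernoff) method. First I would reduce to a one-sided estimate: it suffices to prove
\[
\Pr{ \frac{1}{m}\sum_{i=1}^m X_i > t } \le \exp\Big[-c\,m\min\Big(\tfrac{t^2}{L^2},\tfrac{t}{L}\Big)\Big],
\]
since applying this to $-X_1,\dots,-X_m$ (which satisfy the same hypotheses) and taking a union bound yields the two-sided statement with the extra factor $2$.

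The heart of the matter is a moment generating function bound for a single centered sub-exponential variable: I claim that if $\E X = 0$ and $\|X\|_\psione \le L$, then there is an absolute constant $c_0>0$ with
\[
\E \exp(\lambda X) \le \exp\big(C\lambda^2 L^2\big) \qquad \text{whenever } |\lambda| \le c_0/L .
\]
To prove this I would first convert the Orlicz-norm hypothesis into the equivalent polynomial moment growth $\E|X|^p \le (CLp)^p$ for all $p\ge 1$ (a standard equivalence of characterizations of the $\psi_1$-norm). Then I expand $\exp(\lambda X)=1+\lambda X+\sum_{p\ge 2}\lambda^p X^p/p!$, take expectations, use $\E X=0$ to kill the linear term, bound $|\E X^p|\le \E|X|^p\le (CLp)^p$, and invoke $p!\ge (p/e)^p$ so that the remaining series is dominated by $\sum_{p\ge 2}(CeL|\lambda|)^p$. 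For $|\lambda|\le c_0/L$ with $c_0$ a small enough absolute constant this geometric series converges and is at most $C\lambda^2L^2$; finally $1+u\le e^u$ turns the additive bound into the claimed exponential one.

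With the single-variable bound in hand I would tensorize: writing $S:=\frac1m\sum_{i=1}^m X_i$ and using independence,
\[
\E \exp(\mu S) = \prod_{i=1}^m \E\exp\big((\mu/m)X_i\big) \le \exp\big(C\mu^2 L^2/m\big) \qquad \text{for } |\mu|\le c_0 m/L,
\]
and then apply Markov's inequality to obtain, for every $0\le \mu\le c_0 m/L$,
\[
\Pr{S>t} \le \exp\big(-\mu t + C\mu^2 L^2/m\big).
\]
The last step is to optimize the exponent over the admissible range. The unconstrained minimizer is $\mu_\star = tm/(2CL^2)$. If $\mu_\star\le c_0 m/L$ (equivalently $t\lesssim L$) I substitute $\mu=\mu_\star$ and get the sub-gaussian rate $\exp(-c\,m t^2/L^2)$; otherwise I substitute the boundary value $\mu=c_0 m/L$ and, using that now $t/L$ is bounded below, get the sub-exponential rate $\exp(-c\,mt/L)$. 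Combining the two regimes gives $\exp[-c\,m\min(t^2/L^2,t/L)]$, the desired one-sided bound.

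I expect the main obstacle to be the single-variable MGF estimate: one must handle the convergence of the exponential series carefully, check that the admissible range of $\lambda$ is of order $1/L$, and keep the coefficient in front of $\lambda^2$ proportional to $L^2$ — the quadratic behaviour near $\lambda=0$ is exactly what produces the sub-gaussian regime for small $t$. Everything afterward (tensorization by independence, Markov's inequality, and the two-case optimization over $\mu$) is routine bookkeeping.
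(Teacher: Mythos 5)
Your proof is correct, and since the paper states this Bernstein-type inequality as a cited background result (with no proof given, referring to \cite{V_RMT}), there is nothing in the paper to diverge from. The Chernoff/MGF route you take --- converting the $\psi_1$-norm to polynomial moment growth, establishing $\E\exp(\lambda X)\le\exp(C\lambda^2L^2)$ for $|\lambda|\le c_0/L$, tensorizing by independence, and optimizing over $\mu$ with a boundary case --- is precisely the standard argument given in the cited reference.
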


\section{Proof of Theorem~\ref{thm: increments}}					\label{s: increments proof}

\begin{proposition}[Concentration of the norm]				\label{prop: conc norm}
  Let $X \in \R^m$ be a random vector with independent coordinates $X_i$ that satisfy
  $\E X_i^2 = 1$ and $\|X_i\|_\psitwo \le K$. Then 
  $$
  \Big\| \|X\|_2 - \sqrt{m} \Big\|_\psitwo \le CK^2.
  $$
\end{proposition}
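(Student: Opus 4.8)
The plan is to prove a concentration inequality for the Euclidean norm of a random vector with independent, mean-square-one, sub-gaussian coordinates. The standard and cleanest route is to first establish concentration for the \emph{squared} norm $\|X\|_2^2 = \sum_{i=1}^m X_i^2$ around its mean $m$, and then transfer this to the norm itself via the elementary inequality $|a - b| \le |a^2 - b^2|/b$ (applied with $a = \|X\|_2$, $b = \sqrt m$) or, more carefully, via the observation that $\big|\|X\|_2 - \sqrt m\big| \le \big|\|X\|_2^2 - m\big|/\sqrt m$ whenever $\|X\|_2 \ge 0$, together with a separate crude bound handling the regime $\|X\|_2^2 \ll m$.

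The key steps, in order, are as follows. First, note that since $\|X_i\|_{\psitwo} \le K$, each centered variable $Y_i := X_i^2 - \E X_i^2 = X_i^2 - 1$ is sub-exponential with $\|Y_i\|_{\psione} \le \|X_i^2\|_{\psione} + 1 \le \|X_i\|_{\psitwo}^2 + 1 \lesssim K^2$, using \eqref{eq: XY} with $X = Y = X_i$ and the fact that $K \gtrsim 1$ (which holds because $\E X_i^2 = 1$ forces $\|X_i\|_{\psitwo}$ to be bounded below by an absolute constant). Second, apply Bernstein's inequality (Theorem \ref{thm: bernstein}) to $\frac1m \sum_i Y_i$: for $t \ge 0$,
\[
\Pr{ \Big| \tfrac1m \|X\|_2^2 - 1 \Big| > t } \le 2 \exp\!\Big[ -cm \min\!\Big( \tfrac{t^2}{K^4}, \tfrac{t}{K^2} \Big) \Big].
\]
Third, convert this into a tail bound for $\big|\|X\|_2 - \sqrt m\big|$. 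On the event $\big|\|X\|_2 - \sqrt m\big| > s$, one has $\big|\|X\|_2^2 - m\big| = \big|\|X\|_2 - \sqrt m\big| \cdot \big|\|X\|_2 + \sqrt m\big| > s\sqrt m$, so setting $t = s/\sqrt m$ above gives, after simplifying the $\min$, a bound of the form $\Pr{\big|\|X\|_2 - \sqrt m\big| > s} \le 2\exp(-cs^2/K^4)$ for the relevant range of $s$ (and the bound is trivially true for $s \gtrsim K^2\sqrt m$). Fourth, integrate this sub-gaussian tail: a tail bound of the form $\Pr{|W| > s} \le 2\exp(-cs^2/K^4)$ is equivalent, up to absolute constants, to $\|W\|_{\psitwo} \lesssim K^2$, which is exactly the claimed conclusion.

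The main obstacle is the bookkeeping in the third step: the Bernstein bound has a mixed sub-gaussian/sub-exponential tail, so one must be careful that the substitution $t = s/\sqrt m$ lands in the sub-gaussian regime $t \le K^2$ for the range of $s$ that matters, and that the complementary large-deviation range (where the sub-exponential term would dominate, or where $\|X\|_2$ could be small) is absorbed into the $\exp(-cs^2/K^4)$ bound with room to spare. This is routine but is the only place where the argument is not completely mechanical; everything else is a direct invocation of \eqref{eq: XY}, Theorem \ref{thm: bernstein}, and the standard equivalence between sub-gaussian tail decay and finiteness of the $\psi_2$ norm.
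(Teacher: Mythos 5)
Your overall plan is exactly the paper's: apply Bernstein to $\sum_i (X_i^2 - 1)$ to get concentration of $\|X\|_2^2$ around $m$, transfer this to $\|X\|_2$ via an elementary numeric inequality, and then convert the resulting sub-gaussian tail into a $\psitwo$-norm bound. The centering estimate $\|X_i^2-1\|_{\psione} \lesssim K^2$ and the use of \eqref{eq: XY} and Theorem~\ref{thm: bernstein} are all correct and match the paper.

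There is, however, one genuine gap: the parenthetical claim that the bound is ``trivially true for $s \gtrsim K^2\sqrt m$.'' It is not. The lower tail is indeed trivial once $s \ge \sqrt m$, since $\|X\|_2 \ge 0$, but the upper tail $\Pr{\|X\|_2 > \sqrt m + s}$ needs a real argument. If you only use the inequality $\bigl|\|X\|_2^2 - m\bigr| \ge \sqrt m\,\bigl|\|X\|_2 - \sqrt m\bigr|$ and substitute $t = s/\sqrt m$, then for $s > K^2\sqrt m$ you have $t > K^2$, which lands in the sub-exponential branch of Bernstein and yields only $\exp(-c\,s\sqrt m/K^2)$. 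For $s \gg K^2\sqrt m$ this is \emph{weaker} than the target $\exp(-cs^2/K^4)$, so the tail does not close. The fix, which the paper uses, is a second numeric inequality $|x^2 - m| \ge (x-\sqrt m)^2$ for $x \ge 0$, giving $\Pr{\,|\|X\|_2-\sqrt m|>s\,} \le \Pr{\,|\|X\|_2^2-m|>s^2\,}$; with $t = s^2/m$ the sub-exponential branch now produces $\exp(-cs^2/K^2) \le \exp(-cs^2/K^4)$, and the two regimes ($s \le K^2\sqrt m$ via the first inequality, $s \ge K\sqrt m$ via the second) overlap and cover all $s \ge 0$. You do gesture at needing to ``absorb'' the large-deviation range, but as written the plan would not produce the required decay there without this second inequality.
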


\begin{remark}
If $\E X_i=0$, this proposition follows from~\cite[Theorem~2.1]{RV},
whose proof uses the Hanson-Wright inequality.
\end{remark}

\begin{proof}
Let us apply Bernstein's deviation inequality (Theorem~\ref{thm: bernstein}) for 
the sum of independent random variables 
$\|X\|_2^2 - m = \sum_{i=1}^m (X_i^2-1)$. These random variables have zero means and sub-exponential norms 
$$
\|X_i^2 - 1\|_\psione \le 2 \|X_i^2\|_\psione \le 2 \|X_i\|_\psitwo^2 \le 2K^2.
$$  
(Here we used a simple centering inequality which can be found e.g. in \cite[Remark~5.18]{V_RMT} and the
inequality \eqref{eq: XY}.) Bernstein's inequality implies that
\begin{equation}         \label{eq: conc X2}
\Pr{ \big| \|X\|_2^2 - m \big| > tm } 
  \le 2 \exp \Big[ - c m \min \Big(\frac{t^2}{K^4}, \, \frac{t}{K^2} \Big) \Big], \quad t \ge 0.
\end{equation}

To deduce a concentration inequality for $\|X\|_2 - \sqrt{m}$  from this, let us
employ the numeric bound $|x^2-m| \ge \sqrt{m}\, |x-\sqrt{m}|$ valid for all $x \ge 0$. 
Using this together with \eqref{eq: conc X2} for $t=s/\sqrt{m}$, we obtain
\begin{align*}
\Pr{ \big| \|X\|_2 - \sqrt{m} \big| > s } 
  &\le \Pr{ \big| \|X\|_2^2 - m \big| > s \sqrt{m}} \\
  &\le 2 \exp(-cs^2/K^4) \quad \text{for } s \le K^2 \sqrt{m}.
\end{align*}
To handle large $s$, we proceed similarly but with a different numeric bound, namely 
$|x^2-m| \ge (x-\sqrt{m})^2$ which is valid for all $x \ge 0$.
Using this together with \eqref{eq: conc X2} for $t=s^2/m$, we obtain
\begin{align*}
\Pr{ \big| \|X\|_2 - \sqrt{m} \big| > s } 
  &\le \Pr{ \big| \|X\|_2^2 - m \big| > s^2} \\
  &\le 2 \exp(-cs^2/K^2) \quad \text{for } s \ge K \sqrt{m}.
\end{align*}
Since $K \ge 1$,  in both cases we bounded the probability in question by 
$2 \exp(-cs^2/K^4)$. This completes the proof.\qed 
\end{proof}

\begin{lemma}[Concentration of a random matrix on a single vector]		\label{lem: conc Ax}
  Let $A$ be an isotropic, sub-gaussian random matrix as in \eqref{eq: A}. Then 
  \[
  \Big\| \|Ax\|_2 - \sqrt{m} \Big\|_\psitwo \le CK^2 \quad \mathrm{for\ every\ } x \in S^{n-1}.
  \]
\end{lemma}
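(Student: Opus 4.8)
The plan is to observe that the lemma is an immediate corollary of Proposition~\ref{prop: conc norm}: the vector $Ax$ is exactly the kind of random vector to which that proposition applies, so all that remains is to check its hypotheses. First I would write $Ax = \big(\ip{A_1}{x}, \ldots, \ip{A_m}{x}\big)^\tran$, so that the coordinates of the random vector $X := Ax$ are the random variables $X_i := \ip{A_i}{x}$. Since the rows $A_1, \ldots, A_m$ of $A$ are independent (by the definition \eqref{eq: A} of an isotropic, sub-gaussian matrix), the coordinates $X_i$ are independent.

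Next I would verify the two moment conditions. Isotropy of $A_i$ gives $\E X_i^2 = \E \ip{A_i}{x}^2 = x^\tran \big(\E A_i A_i^\tran\big) x = \norm{x}^2 = 1$, using $x \in S^{n-1}$. The sub-gaussian bound $\|A_i\|_\psitwo \le K$ means precisely $\sup_{\theta \in S^{n-1}} \|\ip{A_i}{\theta}\|_\psitwo \le K$, so in particular $\|X_i\|_\psitwo = \|\ip{A_i}{x}\|_\psitwo \le K$. Hence $X = Ax$ satisfies all the hypotheses of Proposition~\ref{prop: conc norm}, and that proposition yields $\big\| \norm{Ax} - \sqrt m \big\|_\psitwo \le C K^2$, which is the claimed bound.

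There is essentially no obstacle here: the real work — the two Bernstein applications and the numeric tricks $|x^2 - m| \ge \sqrt m\,|x - \sqrt m|$ and $|x^2 - m| \ge (x - \sqrt m)^2$ — has already been carried out in Proposition~\ref{prop: conc norm}. The only content of this lemma is the (routine) bookkeeping that multiplying $A$ by a fixed unit vector $x$ produces a random vector whose coordinates are independent, have unit second moment, and have $\psitwo$-norm at most $K$, and these three facts are exactly what row-independence, isotropy, and the definition of the sub-gaussian norm of a random vector supply.
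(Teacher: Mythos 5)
Your proof is correct and follows exactly the same route as the paper's: identify the coordinates of $Ax$ as the independent variables $X_i = \ip{A_i}{x}$, use isotropy and the row-wise sub-gaussian bound to verify the hypotheses of Proposition~\ref{prop: conc norm}, and apply that proposition. No gaps.
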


\begin{proof}
The coordinates of the vector $Ax \in \R^m$ are independent random variables $X_i := \ip{A_i}{x}$.
The assumption that $\E A_i A_i^\tran = I$ implies that $\E X_i^2 = 1$, and 
the assumption that $\|A_i\|_\psitwo \le K$ implies that $\|X_i\|_\psitwo \le K$.
The conclusion of the lemma then follows from Proposition~\ref{prop: conc norm}. \qed
\end{proof}

Lemma~\ref{lem: conc Ax} can be viewed as a partial case of the increment inequality of 
Theorem~\ref{thm: increments} for $x \in S^{n-1}$ and $y=0$, namely 
\begin{equation}         \label{eq: increments y=0}
\|Z_x\|_\psitwo \le CK^2 \quad \text{for every } x \in S^{n-1}.
\end{equation}
Our next intermediate step is to extend this by allowing $y$ to be an arbitrary unit vector.

\begin{lemma}[Sub-gaussian increments for unit vectors]					\label{lem: increments unit}			
  Let $A$ be an isotropic, sub-gaussian random matrix as in \eqref{eq: A}. Then
  $$
  \Big\| \|Ax\|_2 -\|Ay\|_2 \Big\|_\psitwo \le CK^2 \|x-y\|_2 \quad \textnormal{for every } x,y \in S^{n-1}.
  $$
\end{lemma}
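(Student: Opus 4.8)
The plan is to reduce the increment bound for two unit vectors $x,y$ to the single-vector concentration already established in Lemma~\ref{lem: conc Ax}, by exploiting the geometry of the sphere. Write $\|Ax\|_2 - \|Ay\|_2$ and note that by Lemma~\ref{lem: conc Ax} each term individually concentrates around $\sqrt m$ with sub-gaussian tail at scale $CK^2$; the trouble is that $CK^2$ is generally much larger than the target $CK^2\|x-y\|_2$ when $x$ and $y$ are close. So a naive triangle inequality loses too much, and the argument must be genuinely about the \emph{difference} of the two norms. The key algebraic identity I would use is
\[
\|Ax\|_2^2 - \|Ay\|_2^2 = \langle A(x-y), A(x+y)\rangle,
\]
together with the factorization $\|Ax\|_2 - \|Ay\|_2 = \dfrac{\|Ax\|_2^2 - \|Ay\|_2^2}{\|Ax\|_2 + \|Ay\|_2}$. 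Since $\|Ax\|_2 + \|Ay\|_2$ is typically of order $\sqrt m$ (again by Lemma~\ref{lem: conc Ax}), this heuristically gives $\|Ax\|_2 - \|Ay\|_2 \approx \langle A(x-y), A(x+y)\rangle/(2\sqrt m)$, and the right-hand side is a bilinear form in the rows of $A$ that I would control with Bernstein's inequality.

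More precisely, set $u = x-y$ and $v = x+y$, so that $\|u\|_2 \le \|x-y\|_2$, $\|v\|_2 \le 2$, and crucially $\langle u,v\rangle = \|x\|_2^2 - \|y\|_2^2 = 0$ since $x,y\in S^{n-1}$. Then
\[
\langle A u, A v\rangle = \sum_{i=1}^m \langle A_i, u\rangle\langle A_i, v\rangle,
\]
a sum of independent mean-zero random variables: each summand has mean $\langle u,v\rangle = 0$ by isotropy, and sub-exponential norm at most $\|\langle A_i,u\rangle\|_\psitwo \|\langle A_i,v\rangle\|_\psitwo \le K^2\|u\|_2\|v\|_2$ by \eqref{eq: XY}. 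Bernstein's inequality (Theorem~\ref{thm: bernstein}) then gives that $|\langle Au, Av\rangle|$ is, with overwhelming probability, at most a constant times $\sqrt m\, K^2\|u\|_2\|v\|_2$ in the sub-gaussian regime, i.e.\ $\lesssim \sqrt m\, K^2 \|x-y\|_2$. Dividing by $\|Ax\|_2 + \|Ay\|_2 \gtrsim \sqrt m$ (on the high-probability event supplied by Lemma~\ref{lem: conc Ax}) yields the desired bound $\|Ax\|_2 - \|Ay\|_2 \lesssim K^2\|x-y\|_2$ with sub-gaussian tails; converting these tail bounds into the Orlicz-norm statement is routine. One must handle separately the sub-exponential tail contribution in Bernstein (the range where the $t/L$ term dominates), but since we only need a sub-gaussian \emph{increment} inequality and the deviation scale there is comparable, this contributes at most the same order after a standard case split, and the event where $\|Ax\|_2 + \|Ay\|_2$ is atypically small (say below $\sqrt m /2$) is absorbed because it has probability $\exp(-cm/K^4)$, which is negligible against the scales in question.

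The main obstacle I anticipate is bookkeeping the division by the random denominator $\|Ax\|_2 + \|Ay\|_2$: one cannot simply replace it by $2\sqrt m$ deterministically, so the cleanest route is to prove a tail bound
\[
\Pr{ \big|\|Ax\|_2 - \|Ay\|_2\big| > s\|x-y\|_2 } \le 2\exp(-cs^2/K^4)
\]
directly, by intersecting the Bernstein event for $\langle Au,Av\rangle$ with the Lemma~\ref{lem: conc Ax} event $\{\|Ax\|_2 + \|Ay\|_2 \ge \sqrt m\}$, and then arguing that outside this good event the probability is already $\exp(-cm/K^4)$, which is dominated by $\exp(-cs^2/K^4)$ whenever $s \lesssim \sqrt m$ (and for larger $s$ the bound is vacuous since $\|Ax\|_2 - \|Ay\|_2$ is pointwise small compared to $\sqrt m$ in that regime, or can be controlled crudely). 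Once the tail bound holds for all $s\ge 0$, the stated $\psitwo$-norm estimate follows from the standard equivalence between sub-gaussian tail decay and finiteness of the Orlicz norm. The orthogonality $\langle x-y, x+y\rangle = 0$ on the sphere is what makes the summands in the bilinear form mean-zero, and this is exactly the place where the hypothesis $x,y\in S^{n-1}$ is used — mirroring the role rotation-invariance plays in Schechtman's Gaussian argument.
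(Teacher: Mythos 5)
Your approach is essentially the paper's: you use the same algebraic identity $\|Ax\|_2^2-\|Ay\|_2^2=\langle A(x-y),A(x+y)\rangle$, the same observation that $\langle x-y,x+y\rangle=0$ on the sphere (which makes the summands $\langle A_i,u\rangle\langle A_i,v\rangle$ mean-zero via isotropy), the same Bernstein bound on the bilinear form, and the same intersection with the Lemma~\ref{lem: conc Ax} event controlling the random denominator $\|Ax\|_2+\|Ay\|_2$. The split into a moderate-$s$ regime (Bernstein stays sub-gaussian because $s\lesssim K^2\sqrt m$) and a large-$s$ regime is also the paper's Case~2/Case~1 split.

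One step, however, is wrong as stated. In the large-$s$ regime you assert that the bound is vacuous ``since $\|Ax\|_2-\|Ay\|_2$ is pointwise small compared to $\sqrt m$,'' but no such pointwise bound exists: $\|Ax\|_2$ is an unbounded random variable, and nothing forces $\bigl|\|Ax\|_2-\|Ay\|_2\bigr|\le\sqrt m$. Your fallback ``or can be controlled crudely'' is not a proof. The correct (and simple) fix, which is exactly the paper's Case~1, is to use the reverse triangle inequality $\bigl|\|Ax\|_2-\|Ay\|_2\bigr|\le\|A(x-y)\|_2$ and then apply Lemma~\ref{lem: conc Ax} to the unit vector $u=(x-y)/\|x-y\|_2$: for $s\ge 2\sqrt m$,
\[
\Pr{\|Au\|_2>s}\le\Pr{\|Au\|_2-\sqrt m>s/2}\le\exp(-cs^2/K^4).
\]
With this replacement the proof is complete and coincides with the paper's.
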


\begin{proof}
Given $s \ge 0$, we will bound the tail probability 
\begin{equation}         \label{eq: p}
p:= \Pr{ \frac{\big| \|Ax\|_2-\|Ay\|_2 \big|}{\|x-y\|_2} > s}.
\end{equation}

{\em Case 1: $s \ge 2\sqrt{m}$.} Using the triangle inequality we have $|\|Ax\|_2-\|Ay\|_2| \le \|A(x-y)\|_2$.
Denoting $u := (x-y)/\|x-y\|_2$, we find that 
$$
p \le \Pr{\|Au\|_2 > s} 
\le \Pr{\|Au\|_2 -\sqrt{m} > s/2} 
\le \exp(-Cs^2/K^4).
$$
Here the second bound holds since $s \ge 2\sqrt{m}$, and the last bound follows by Lemma~\ref{lem: conc Ax}.

\medskip

{\em Case 2: $s \le 2\sqrt{m}$.} Multiplying both sides of the inequality defining $p$ in \eqref{eq: p} 
by $\|Ax\|_2+\|Ay\|_2$, we can write $p$ as
$$
p = \Pr{ |Z| > s \big( \|Ax\|_2+\|Ay\|_2 \big) }
\quad \text{where} \quad
Z := \frac{\|Ax\|_2^2-\|Ay\|_2^2}{\|x-y\|_2}.
$$
In particular,
$$
p \le \Pr{ |Z| > s \|Ax\|_2} \le \Pr{ |Z| > \frac{s\sqrt{m}}{2} } + \Pr{ \|Ax\|_2 \le \frac{\sqrt{m}}{2} }
=: p_1 + p_2.
$$

We may bound $p_2$ using Lemma~\ref{lem: conc Ax}: 
\begin{equation}         \label{eq: p2}
p_2 \le 2\exp \Big( - \frac{(\sqrt{m}/2)^2}{C^2 K^4} \Big) 
= 2\exp \Big( - \frac{m}{4 C^2 K^4} \Big) 
\le 2\exp \Big( - \frac{s^2}{16 C^2 K^4} \Big). 
\end{equation}

Next, to bound $p_1$, it will be useful to write $Z$ as
$$
Z = \frac{\ip{A(x-y)}{A(x+y)}}{\|x-y\|_2} = \ip{Au}{Av}, 
\quad \text{where} \quad
u := \frac{x-y}{\|x-y\|_2}, \quad 
v := x+y.
$$
Since the coordinates of $Au$ and $Av$ are $\ip{A_i}{u}$ and $\ip{A_i}{v}$ respectively, 
$Z$ can be represented as a sum of independent random variables:
\begin{equation}         \label{eq: Z sum}
Z = \sum_{i=1}^m \ip{A_i}{u} \ip{A_i}{v}.
\end{equation}

Note that each of these random variables $\ip{A_i}{u} \ip{A_i}{v}$ has zero mean, since 
$$
\E \ip{A_i}{x-y} \ip{A_i}{x+y} = \E \big[ \ip{A_i}{x}^2 - \ip{A_i}{y}^2 \big]
= 1 - 1 = 0.
$$
(Here we used the assumptions that $\E A_i A_i^\tran = I$ and $\|x\|_2 = \|y\|_2 = 1$.)
Moreover, the assumption that $\|A_i\|_\psitwo \le K$ implies that 
$\|\ip{A_i}{u}\|_\psitwo \le K \|u\|_2 = K$ and $\|\ip{A_i}{v}\|_\psitwo \le K \|v\|_2 \le 2K$.
Recalling inequality \eqref{eq: XY}, we see that $\ip{A_i}{u} \ip{A_i}{v}$ are sub-exponential 
random variables with $\|\ip{A_i}{u} \ip{A_i}{v}\|_\psione \le C K^2$.
Thus we can apply Bernstein's inequality (Theorem~\ref{thm: bernstein}) to the 
sum of mean zero, sub-exponential random variables in \eqref{eq: Z sum}, and obtain 
$$
p_1 = \Pr{ |Z| > \frac{s\sqrt{m}}{2} }
\le 2\exp(-cs^2/K^4), \quad \text{since } s \le 2 K^2 \sqrt{m}.
$$

Combining this with the bound on $p_2$ obtained in \eqref{eq: p2}, we conclude that
$$
p = p_1 + p_2 \le 2 \exp(-cs^2/K^4).
$$
This completes the proof.\qed
\end{proof}

\medskip

Finally, we are ready to prove the increment inequality in full generality, for all $x,y \in \R^n$. 

\begin{proof}[of Theorem~\ref{thm: increments}]
Without loss of generality we may assume that $\|x\|_2 = 1$ and $\|y\|_2 \ge 1$. 
Consider the unit vector $\bar{y} := y/\|y\|_2$ and apply the triangle inequality to get
$$
\|Z_x - Z_y\|_\psitwo \le \|Z_x - Z_{\bar{y}}\|_\psitwo +  \|Z_{\bar{y}} - Z_y\|_\psitwo =: R_1 + R_2.
$$
By Lemma~\ref{lem: increments unit}, $R_1 \le CK^2 \|x-\bar{y}\|_2$. 
Next, since $\bar{y}$ and $y$ are collinear, we have $R_2 = \|\bar{y} - y\|_2 \cdot \|Z_{\bar{y}}\|_\psitwo$.
Since $\bar{y}\in S^{n-1}$, inequality \eqref{eq: increments y=0} states that $\|Z_{\bar{y}}\|_\psitwo \le C K^2$, 
and we conclude that $R_2 \le CK^2 \|\bar{y} - y\|_2$. Combining the bounds on $R_1$ and $R_2$, 
we obtain 
$$
\|Z_x - Z_y\|_\psitwo \le CK^2 \big( \|x-\bar{y}\|_2 + \|\bar{y} - y\|_2 \big).
$$
It is not difficult to check that since $\|y\|_2 \ge 1$, we have $\|x-\bar{y}\|_2 \le \|x-y\|_2$ and 
$\|\bar{y} - y\|_2 \le \|x-y\|_2$. This completes the proof. \qed
\end{proof}

\section{Proof of Theorem~\ref{thm: local version}}		\label{sec:prooflocal}

We will prove a slightly stronger statement. For $r>0$, define
$$
E_r := \sup_{x \in \frac{1}{r} T \cap B_2^n} |Z_x|.
$$
Set $W := \lim_{r \to \rad(T)_-} \gamma\left( \frac{1}{r} T \cap B_2^n \right)$.
Since $\frac{1}{r} T \cap B_2^n$ contains at least one point on the boundary for every $r < \rad(T)$, 
it follows that $W \geq \sqrt{2/\pi}$.
We will show that, with probability at least $1 - \exp\left( -c't^2 W^2 \right)$, one has
$$
E_r \leq t \cdot CK^2 \gamma\left(
\frac{1}{r} T \cap B_2^n \right) \mathrm{ for\  all\ } r \in (0, \infty),
$$ 
which, when combined with the assumption of homogeneity, will clearly imply the theorem with a stronger probability. 

Fix $\eps > 0$. Let $\eps = r_0 < r_1 < \ldots < r_N$ be a sequence of real
numbers satisfying the following conditions:
\begin{itemize}
   \item $\gamma\left( \frac{1}{r_i} T \cap B_2^n \right) = 2 \cdot
      \gamma\left( \frac{1}{r_{i+1}} T \cap B_2^n \right)$ for $i=0,1,\dots,N-1$, and
   \item
      $\gamma\left( \frac{1}{r_N} T \cap B_2^n \right) \leq 2 \cdot W$.
\end{itemize}
The quantities $r_1, \ldots, r_N$ exist since the map $r \mapsto
\gamma\left( \frac{1}{r} T \cap B_2^n \right)$ is decreasing and
continuous when $T$ is star-shaped.

Applying the Majorizing Measure Theorem~\ref{thm: talagrand} to the set $\frac{1}{r} T \cap B_2^n$
and noting that $Z_0=0$, we obtain that
$$
E_r \lesssim K^2 \left[ \gamma\left( \frac{1}{r} T \cap B_2^n \right) + u \right]
$$
with probability at least $1-\exp(-u^2)$.
Set $c \coloneqq 10 \cdot \sqrt{\frac{\pi}{2}} \geq 10/W$ and use the above inequality 
for $u = c t \gamma\left( \frac{1}{r} T \cap B_2^n \right)$. We get
\begin{equation}
   E_r \lesssim t\cdot K^2 \gamma\left( \frac{1}{r} T \cap B_2^n \right)
   \label{eqn:concr}
\end{equation}
holds with probability at
least $1 - \exp\left( - c^2 t^2\gamma\left( \frac{1}{r} T \cap B_2^n \right)^2
\right)$. Thus for each $i \in \{0,1,\ldots, N\}$, we have
\begin{equation}
   E_{r_i} \lesssim 
   t\cdot K^2 \gamma \left( \frac{1}{r_i} T \cap B_2^n \right)
   \label{eqn:conc}
\end{equation}
with probability at least $$1 - \exp\left( -c^2 t^24^{N-i} \gamma \left( \frac{1}{r_N} T
\cap B_2^n \right)^2 \right) \geq 1 - \exp\left( -c^2 t^2 4^{N-i} W^2\right).$$
By our choice of $c$ and the union bound, \eqref{eqn:conc} holds for all $i$
simultaneously with probability at least
\[
   1 - \sum_{i=0}^N \exp\left( -c^2 t^24^{N-i} W^2 \right)
   \geq 1 - 2 \cdot \exp(-100 t^2 W^2) \eqqcolon 1-\exp(-c't^2 W^2).
\]

We now show that if \eqref{eqn:conc} holds for all $i$, then
\eqref{eqn:concr} holds for all $r \in (\eps, \infty)$. This is done via an
approximation argument. To this end, assume that \eqref{eqn:conc} holds and let
$r \in (r_{i-1}, r_{i})$ for some $i \in [N]$.  Since $T$ is star-shaped, we have
$\frac{1}{r} T \cap B_2^n \subseteq \frac{1}{r_{i-1}} T \cap B_2^n$, so
\begin{align*}
   E_r 
   \leq E_{r_{i-1}} 
   \lesssim t\cdot K^2 \gamma\left( \frac{1}{r_{i-1}} T \cap
   B_2^n\right)
   & = 2 t\cdot K^2 \gamma\left( \frac{1}{r_{i}} T \cap B_2^n \right) \\
   & \leq 2t\cdot K^2 \gamma\left( \frac{1}{r} T \cap B_2^n \right).
\end{align*}
Also, for $\rad(T) \geq r > r_N$ we have
\[
E_{r} 
\lesssim t\cdot K^2 \gamma\left( \frac{1}{r_N} T \cap B_2^n \right) 
\leq 2t\cdot K^2 W
\leq 2t \cdot K^2 \gamma\left( \frac{1}{r} T \cap B_2^n \right).
\]

Let $F_k$ be the event that \eqref{eqn:concr} holds for all $r \in (1/k,
\infty)$. We have just shown that $\Pr{F_k} \geq 1 - \exp\left( -c't^2W^2
\right)$ for all $k \in \N$. As $F_1 \supseteq F_2 \supseteq \ldots$ and
$\cap_k F_k \eqqcolon F_{\infty}$ is the event that \eqref{eqn:concr} holds for
all $r \in (0,\infty)$, it follows by continuity of measure that $\Pr{F_\infty}
\geq 1 - \exp\left( -c't^2W^2 \right)$, thus completing the proof.

\section{Further Thoughts}

In the definition of Gaussian complexity $\gamma(T) = \E \sup_{x \in T} |\ip{g}{x}|$, 
the absolute value is essential to make Theorem~\ref{thm: deviation} hold. 
In other words, the bound would fail if we replace $\gamma(T)$ by the 
Gaussian width $w(T) = \E \sup_{x \in T} \ip{g}{x}$. 
This can be seen by considering a set $T$ that consists of a single point. 

However, {\em one-sided} deviation inequalities do hold for Gaussian width. 
Thus a one-sided version of Theorem~\ref{thm: deviation} states that 
\begin{equation}
\E \sup_{x \in T} \Big( \|Ax\|_2 -  \sqrt{m} \|x\|_2 \Big) \le C K^2 \cdot w(T),
\label{onesided}
\end{equation}
and the same bound holds for $\E \sup_{x \in T} \big( -\|Ax\|_2 +  \sqrt{m} \|x\|_2  \big)$.
To prove~(\ref{onesided}), one modifies 
the argument in Section~\ref{s: talagrand} as follows.
Fix a $y\in T$.
Since 
$\E \|Ay\|_2 \le \left( \E \|Ay\|_2^2 \right)^{1/2} = \sqrt{m} \|y\|_2$,
we have $\E Z_y \leq 0$, thus
$$
\E \sup_{x \in T} Z_x 
\leq \E \sup_{x \in T} (Z_x - Z_y) 
\le \E \sup_{x \in T} |Z_x - Z_y|
\lesssim K^2 w(T)
$$
where the last bound follows by Majorizing Measure Theorem~\ref{thm: talagrand}. 
Thus in this argument there is no need to separate the term $\E|Z_y|$ as was done before in Equation \eqref{eq: important label}. 

\bibliographystyle{spmpsci}
\bibliography{lmpv-deviation-bib}

\end{document}